\documentclass[11pt, letterpaper]{article}    	
 \usepackage[margin=1in]{geometry}
\usepackage{setspace}
\usepackage{graphicx}	
\usepackage{subcaption}
\usepackage{float}

\usepackage[draft]{todonotes}   

\usepackage{enumerate}	
\usepackage{paralist}		

								
\usepackage{amssymb}
\usepackage{amsmath, amsthm, amssymb}

\newtheorem{thm}{Theorem}[section]

\newtheorem{prop}[thm]{Proposition}
\newtheorem{claim}{Claim}
\newtheorem{lemma}[thm]{Lemma}
\newtheorem{cor}[thm]{Corollary}
\newtheorem{prob}[thm]{Problem}

\newtheoremstyle{remark}%
    {8pt plus2pt minus4pt}%
    {8pt plus2pt minus4pt}%
    {\upshape}
    {}%
    {\bfseries\scshape}%
    {}%
    {6pt}
    {}%

\theoremstyle{remark}
\newtheorem{rem}[thm]{\textbf {Remark}}

\theoremstyle{definition}

\usepackage{tikz}
\newcounter{casenum}

\newcommand*{\rom}[1]{\expandafter{\romannumeral #1\relax}}

\usepackage{authblk}
\makeatletter
\def\@cite#1#2{{\normalfont[{\bfseries#1\if@tempswa , #2\fi}]}}
\makeatother

\title{An analogue of the Erd\H{o}s--Gallai theorem for random graphs}

\author{ J\'ozsef Balogh\thanks{Department of Mathematical Sciences, University of Illinois at Urbana-Champaign, IL, USA, and Moscow Institute of Physics and Technology, Russian Federation. Email: \texttt{jobal@illinois.edu}.
Partially supported by NSF Grant DMS-1764123 and Arnold O. Beckman Research Award (UIUC) Campus Research Board 18132 and the Langan Scholar Fund (UIUC).} \qquad Andrzej Dudek\thanks{Department of Mathematics, Western Michigan University, Kalamazoo, MI, USA. Email: \texttt{andrzej.dudek @wmich.edu}. Partially supported  by Simons Foundation Grant \#522400.} \qquad Lina Li\thanks{Department of Mathematics, University of Illinois at Urbana-Champaign, Urbana, IL, USA. Email: \texttt{linali2 @illinois.edu}.}}

\begin{document}

\maketitle

\begin{abstract}
Recently, variants of many classical extremal theorems have been proved in the random environment. We, complementing existing results, extend the Erd\H{o}s-Gallai Theorem in random graphs. In particular, we determine, up to a constant factor, the maximum number of edges in a $P_n$-free subgraph of $G(N,p)$, practically for all values of $N,n$ and $p$. Our work is also motivated by the recent progress on the size-Ramsey number of paths.
\end{abstract}

\section{Introduction}
A celebrated theorem of Erd\H{o}s and Gallai~\cite{EG1959} from 1959 determines the maximum number of edges in an $n$-vertex graph with no $k$-vertex path~$P_k$.
\begin{thm}[Erd\H{o}s and Gallai~\cite{EG1959}]\label{EG}
For $n, k\geq 2$, if $G$ is an $n$-vertex graph with no copy of~$P_k$, then the number of edges of $G$ satisfies $e(G)\leq \frac12 (k-2)n$. If $n$ is divisible by $k-1$, then the maximum is achieved by a union of disjoint copies of $K_{k-1}$. 
\end{thm}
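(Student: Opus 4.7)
The plan is to prove $e(G) \le \tfrac12(k-2)n$ by induction on $n$ and then verify the extremal construction separately. The base case $n \le k-1$ is immediate, since $e(G) \le \binom{n}{2} = \tfrac12 n(n-1) \le \tfrac12 n(k-2)$.

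For the inductive step $n \ge k$, I would distinguish three cases. First, if $G$ is disconnected with components on $n_1,\ldots,n_t$ vertices, each component is $P_k$-free on fewer than $n$ vertices, so applying the induction hypothesis componentwise and summing yields $e(G) \le \tfrac12(k-2)\sum_i n_i = \tfrac12(k-2)n$. Second, if $G$ is connected and some vertex $v$ has $d(v) \le (k-2)/2$, then $G-v$ is $P_k$-free on $n-1$ vertices, and the induction hypothesis gives
\[
e(G) = e(G-v) + d(v) \le \tfrac12(k-2)(n-1) + \tfrac12(k-2) = \tfrac12(k-2)n.
\]
Third, if $G$ is connected with $\delta(G) \ge \lceil(k-1)/2\rceil$, I would invoke the classical Dirac-type lemma that a connected graph on $n$ vertices with minimum degree $\delta$ contains a path on at least $\min(2\delta+1, n)$ vertices; since $2\delta+1 \ge k$ and $n \ge k$, this produces a $P_k$, contradicting the hypothesis. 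For the extremal claim, the disjoint union of $n/(k-1)$ copies of $K_{k-1}$ obviously contains no $P_k$ and has exactly $\frac{n}{k-1}\binom{k-1}{2} = \tfrac12(k-2)n$ edges, matching the bound.

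The main obstacle is the Dirac-type lemma used in the third case; its proof uses a P\'osa-style rotation on a longest path, combined with connectedness to extend a long cycle into a long path, and while standard, it is itself nontrivial. The rest of the argument is essentially bookkeeping: the threshold $d(v) \le (k-2)/2$ is chosen so that the induction closes exactly, matching the $\tfrac12(k-2)$ edge density of the extremal $K_{k-1}$-union, and this is the only delicate point in balancing the three cases.
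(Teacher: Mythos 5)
The paper states the Erd\H{o}s--Gallai theorem as a cited background result (Theorem~\ref{EG}) and gives no proof of it, so there is no internal argument to compare against. Your proposal is a correct and essentially standard textbook proof: the induction on $n$, the split into disconnected / low-degree-vertex / high-minimum-degree cases, and the use of a P\'osa-rotation lemma (a longest path in a connected graph of minimum degree $\delta$ has at least $\min(2\delta+1,n)$ vertices) all check out, and the case boundaries $d(v)\le(k-2)/2$ versus $\delta(G)\ge\lceil(k-1)/2\rceil$ are indeed complementary for both parities of $k$, so the induction closes with equality. The original Erd\H{o}s--Gallai argument runs somewhat differently (they first prove the companion cycle theorem that a graph with more than $\tfrac12(c-1)n$ edges contains a cycle of length at least $c+1$, again by a vertex-deletion induction, and then deduce the path bound), whereas yours attacks the path statement directly via the longest-path rotation lemma; the direct route is shorter if one is willing to quote that lemma, while the cycle-first route gives the cycle theorem as a bonus. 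Either way your write-up is complete and the extremal verification is correct.
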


An important direction of combinatorics in recent years is the study of sparse random analogues of classical extremal results; that is, the extent to which of these results remain true in a random setting.
For graphs $G$ and $F$, we write $\mathrm{ex}(G, F)$ for the maximum number of edges in an $F$-free subgraph of $G$. For example, the 
Erd\H{o}s--Gallai theorem asserts that $\mathrm{ex}(K_n, P_k) = \frac12 (k-2)n$ if $n$ is divisible by $k-1$.

The study of the random variable $\mathrm{ex}(G, F)$, where $G$ is the Erd\H{o}s-R\'{e}nyi random graph $G(n, p)$, was initiated by Babai, Simonovits and Spencer~\cite{babai1990extremal}, and by Frankl and R\"{o}dl~\cite{frankl1986large}. 
After efforts by several researchers~\cite{haxell1995turan, haxell1996turan, kohayakawa1998extremal, kohayakawa1997onk, kohayakawa2004turan, szabo2003turan}, Conlon and Gowers~\cite{conlon2016combinatorial} and Schacht~\cite{schacht2016extremal} finally proved a sparse random version of the Erd\H{o}s-Stone theorem, showing a \textit{transference principle} of Tur\'{a}n-type results, that is, when a random graph inherits its (relative) extremal properties from the classical deterministic case.
%
Note that via the hypergraph container method the same results were proved (\cite{BMS} and \cite{ST}), even when $|F|$ is a reasonable large function of $n$.
A special case of this result, when $F$ is the $k$-vertex path $P_k$, can be viewed as a weak analogue (as the Tur\'{a}n density is 0) of the Erd\H{o}s-Gallai theorem on the random graph for paths with a fixed size. In this paper, we investigate the random analogue of the Erd\H{o}s-Gallai theorem for general paths, whose length might increase with the order of the random graph.

We say that events $A_n$ in a probability space hold \textit{asymptotically almost surely} (or a.a.s.), if the probability that $A_n$ holds tends to 1 as $n$ goes to infinity. 
The typical appearance of long paths and cycles is one of the most thoroughly studied direction in random graph theory. Over the past decades, there were many diverse and beautiful results in this subject. In a seminal paper, Ajtai, Koml\'{o}s and Szemer\'{e}di~\cite{ajtai1981longest}, confirming a conjecture of Erd\H{o}s, proved that for $p=\frac{c}{n}$ with $c>1$, $G(n, p)$ contains a path of length $\alpha(c)n$ a.a.s.~where $\lim_{c\rightarrow \infty}\alpha(c)=1$. Frieze~\cite{frieze1986large} later determined the asymptotics of the number of vertices not covered by a longest path in $G(n,p)$. For Hamiltonicity, Bollob\'{a}s~\cite{bollobas1984evolution} and Koml\'{o}s and Szemer\'{e}di~\cite{komlos1983limit} independently proved that for $p\geq \frac{\log n + \log\log n +\omega(1)}{n}$, the random graph $G(n, p)$ is a.a.s.~Hamiltonian. 
Tur\'{a}n-type results for long cycles in $G(n, p)$ was also studied under the name of \textit{global resilience}, that is, the minimum number $r$ such that one can destroy the graph property by deleting $r$ edges. 
Dellamonica Jr, Kohayakawa, Marciniszyn and Steger~\cite{dellamonica2008resilience} determined the global resilience of $G(n, p)$ with respect to the property of containing a cycle of length proportional to the number of vertices.
Very recently, Krivelevich, Kronenberg and Mond~\cite{Krivelevich2019turan} studied the transference principle in the context of long cycles and in particular  showing the following.
\begin{thm}[Corollary 1.10 in~\cite{Krivelevich2019turan}]
For every $0<\beta<\frac{1}{5}$, there exists $C>0$ such that if $G=G(N,p)$ where $p\ge \frac{C}{N}$, then for any $\frac{C_1}{\log(1/\beta)}\cdot \log N \le n \le (1-C_2\beta)N$, with probability $1-e^{\Omega(N)}$,
\begin{equation}\label{eq: KKM}
\mathrm{ex}(G(N, p), C_n)\leq \left(\frac{\mathrm{ex}(K_N, C_n)}{\binom{N}{2}} + \beta \right)e(G(N, p)),
\end{equation}
where $C_1,C_2 > 0$ are absolute constants.
\end{thm}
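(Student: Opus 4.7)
Because the statement is presented as a corollary, my plan is to reduce it to a general host-graph theorem of the shape proved in \cite{Krivelevich2019turan}: any $N$-vertex graph $G$ with sufficient quasi-randomness (edge density $p$, uniform edge-distribution across large vertex sets, controlled degrees) should inherit the $C_n$-Tur\'an density of $K_N$ up to an additive $\beta$-error. The corollary then follows once $G(N,p)$ is shown to satisfy these hypotheses with probability $1 - e^{-\Omega(N)}$.

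\textbf{Step 1 (pseudo-randomness of the host).} I would verify that, with probability $1 - e^{-\Omega(N)}$, $G := G(N,p)$ with $p \ge C/N$ satisfies
\[
e_G(S,T) = (1 \pm \tfrac{\beta}{10}) \, p \, |S| \, |T|
\]
for every pair of disjoint subsets $S, T \subseteq V(G)$ with $|S|, |T| \ge \tfrac{\beta N}{10}$, together with $e(G) = (1 \pm o(1)) p \binom{N}{2}$ and a one-sided bound on $\Delta(G)$. Each event has failure probability $\exp(-\Omega(\beta^2 p N^2)) = \exp(-\Omega(\beta^2 C N))$, which beats the $\le 4^N$ choices of $(S,T)$ once $C = C(\beta)$ is taken sufficiently large.

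\textbf{Step 2 (deterministic ingredient).} On the pseudo-random event, I would argue by contradiction: assume $H \subseteq G$ is $C_n$-free with $e(H) > (\alpha + \beta) e(G)$, where $\alpha := \mathrm{ex}(K_N, C_n)/\binom{N}{2}$. Pass to $H' \subseteq H$ by repeatedly deleting any vertex of current $H$-degree below $(\alpha + \tfrac{\beta}{2}) p N$. Only $O(\beta N)$ vertices can be removed before the process halts, so $H'$ lives on $(1 - O(\beta))N$ vertices and has $\delta(H') \ge (\alpha + \tfrac{\beta}{2}) p N$. Combined with the edge-distribution of $G$ from Step~1, this makes $H'$ a sparse expander: every $S \subseteq V(H')$ with $|S| \le n/2$ satisfies $|N_{H'}(S)| \ge (1 + \Omega(\beta))|S|$.

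\textbf{Step 3 (cycle extraction).} Apply P\'osa's rotation--extension technique to $H'$. Expansion ensures that the endpoint set reachable from a longest-path endpoint under rotations has size $\Omega(\beta N)$, forcing a cycle of length $\ge n$. The slack $n \le (1 - C_2 \beta) N$ then permits a shortening step: use an expansion-supplied chord of the cycle to shorten it, iterating until the length is exactly $n$, contradicting $C_n$-freeness of $H$.

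The main obstacle is Step~3: producing a cycle of length \emph{exactly} $n$ rather than just a long cycle. Rotation--extension naturally yields long paths and cycles, but trimming them to a prescribed length inside a very sparse expander (with $p = \Theta(1/N)$) requires a careful chord-surgery or absorption-type argument and dominates the technical work. The lower bound $n \ge C_1 \log N / \log(1/\beta)$ is needed so that the P\'osa booster sets have the required size in this sparse regime, while the upper bound $n \le (1 - C_2 \beta)N$ creates the room for the shortening step; calibrating the constants $C_1, C_2$ against the expansion parameters from Step~2 is where the quantitative dependency in the corollary is pinned down.
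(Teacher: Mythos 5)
This statement is not proved in the paper you were given: it is quoted verbatim as Corollary~1.10 of Krivelevich, Kronenberg, and Mond (\texttt{arXiv:1911.08539}) and is used purely as a point of comparison in the introduction. There is therefore no proof in this paper to check your sketch against, and your proposal cannot be assessed as a reconstruction of ``the paper's own proof.'' On its own terms, your outline (pseudo-randomness of $G(N,p)$, passing to a bounded-minimum-degree subgraph, P\'osa rotation--extension, then trimming to length exactly $n$) is a reasonable high-level guess at the KKM strategy, and you correctly flag the hardest step --- obtaining a cycle of \emph{exactly} length $n$ rather than merely a long cycle in a host with $p = \Theta(1/N)$ --- but to evaluate whether the details go through you would need to consult the KKM paper itself, not this one.
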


We aim to explore the global resilience of general long paths. More formally, given integers $N>n$, we are interested in determining the asymptotic behavior of random variable $\mathrm{ex}(G(N, p), P_{n+1})$ as $N$ and $n$ go to infinity at the same time. 

We start with an observation, which is proved in Section~\ref{sec:proofs}.
\begin{prop}\label{prop:lower}
For every $\frac{1}{N^2}\ll p\leq \frac{1}{N}$ and $n\ge 2$, a.a.s.~we have $\mathrm{ex}(G(N, p), P_{n+1})=\Theta(pN^2)$. In particular, a.a.s.~$\mathrm{ex}(G(N, 1/N), P_{n+1}) \ge N/15$.
\end{prop}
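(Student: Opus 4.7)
My plan is to prove matching upper and lower bounds on $\mathrm{ex}(G(N,p),P_{n+1})$, both of order $pN^2$. The upper bound is essentially immediate: any $P_{n+1}$-free subgraph has at most $e(G(N,p))$ edges, and since the expected number of edges $p\binom{N}{2}$ tends to infinity whenever $p\gg 1/N^2$, a standard Chernoff bound gives $e(G(N,p))=(1+o(1))p\binom{N}{2}=O(pN^2)$ a.a.s.

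For the lower bound I would exhibit a matching of size $\Omega(pN^2)$ inside $G(N,p)$; any matching is $P_3$-free and hence $P_{n+1}$-free for every $n\ge 2$. The natural choice is the set of \emph{isolated edges}, i.e.\ edges $\{u,v\}$ whose two endpoints both have degree exactly $1$ in $G(N,p)$; these automatically form a matching. Let $X$ count them. For a fixed edge $e=\{u,v\}$ the indicator $X_e$ satisfies $\mathbb{E}[X_e]=p(1-p)^{2(N-2)}$, since we need $e$ to appear and each of $u,v$ to emit no further edge. As $p\le 1/N$ implies $(1-p)^{2N-4}\ge (1-o(1))e^{-2}$, we obtain
\[
\mathbb{E}[X]=\binom{N}{2}p(1-p)^{2N-4}=\Omega(pN^2).
\]
For concentration I apply the second moment method. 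The key observation is that $X_eX_f\equiv 0$ whenever $e,f$ share a vertex, while for vertex-disjoint $e=\{u,v\},\ f=\{x,y\}$ one has $\mathbb{E}[X_eX_f]=p^2(1-p)^{4N-12}$, since the $4N-12$ edges incident to $\{u,v,x,y\}$ other than $e,f$ must all be absent. A short computation then yields
\[
\frac{\mathbb{E}[X^2]}{\mathbb{E}[X]^2}=\frac{1}{\mathbb{E}[X]}+\frac{(N-2)(N-3)}{N(N-1)}(1-p)^{-4}\longrightarrow 1,
\]
so by Chebyshev $X=(1+o(1))\mathbb{E}[X]=\Omega(pN^2)$ a.a.s., and these isolated edges form the required $P_{n+1}$-free subgraph.

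For the explicit bound at $p=1/N$, the mean is $\mathbb{E}[X]=\frac{N-1}{2}(1-1/N)^{2N-4}=(1+o(1))\frac{N}{2e^2}$, and since $2e^2\approx 14.78<15$ the same concentration gives $X\ge N/15$ a.a.s. The only real computation is the second moment estimate, and the vertex-sharing/vertex-disjoint dichotomy keeps the variance small, so I do not foresee any genuine obstacle.
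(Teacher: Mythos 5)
Your proof is correct and takes essentially the same route as the paper: both count isolated edges via the indicator $X_e$, compute $\mathbb{E}[X]=\binom{N}{2}p(1-p)^{2(N-2)}=\Omega(pN^2)$ and the second moment over vertex-disjoint pairs, and conclude by Chebyshev, with the upper bound coming trivially from $e(G(N,p))$. The only (minor, and welcome) addition is your explicit remark that isolated edges form a matching, hence a $P_3$-free (so $P_{n+1}$-free) subgraph, which the paper leaves implicit.
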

\noindent
Therefore, throughout this paper, we naturally restrict ourselves to the regime $p \geq 1/N$ and have the following trivial lower bound
\begin{equation}\label{trilower}
a.a.s.\quad \mathrm{ex}(G(N, p), P_{n+1})\geq  \mathrm{ex}\left(G\left(N, 1/N\right), P_{n+1}\right)\geq N/15.
\end{equation}
We prove the following results.


\begin{thm}\label{thm: smallN}
Let $3n \leq N \leq ne^{2n}$. The following hold a.a.s.~as $n$ approaches infinity.
\begin{enumerate}[i)]
\item For $p\ge \left(\log\frac{N}{n}\right)/(6n)$, we have
$
\frac14pnN\leq \mathrm{ex}(G(N, p), P_{n+1})\leq 18pnN.
$

\item Let $\omega=\left(\log\frac{N}{n}\right)/(np)$. For $N^{-1} \leq p\leq \left(\log\frac{N}{n}\right)/(6n)$, we have 
\[
\frac{1}{75}\frac{\omega}{\log\omega}pnN\leq \mathrm{ex}(G(N, p), P_{n+1})\leq 8\frac{\omega}{\log\omega}pnN.
\]
\end{enumerate} 
\end{thm}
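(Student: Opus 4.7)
\emph{Lower bounds.}
For part~(i), my plan is to partition $[N]$ into $\lfloor N/n\rfloor$ disjoint blocks of size $n$ and take $H$ to be the union of the induced subgraphs $G(N,p)[B_i]$; each block has only $n$ vertices, so $H$ is automatically $P_{n+1}$-free. The number of edges of $H$ is a sum of independent binomials with mean $\sim pnN/2$, and the hypothesis $p\ge(\log(N/n))/(6n)$ makes this mean grow at least like $N\log(N/n)/12$, so Chernoff's inequality gives $e(H)\ge pnN/4$ a.a.s. For part~(ii) I would use the same template with enlarged block size $m:=\lceil cn\omega/\log\omega\rceil$ for a small absolute constant $c$, giving expected within-block edges $\sim pmN/2=(\omega/\log\omega)\,pnN/2$, which matches the target order. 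Since each $G(N,p)[B_i]\sim G(m,p)$ is no longer automatically $P_{n+1}$-free, the real work is to show that $G(m,p)$ a.a.s.\ contains a $P_{n+1}$-free subgraph of edge density $\Omega(p)$; because $mp=\log(N/n)/\log\omega$ is only logarithmic, $G(m,p)$ is essentially tree-like, and I would extract such a subgraph by iteratively peeling BFS balls of radius $\lfloor n/2\rfloor$ (each $P_{n+1}$-free because its diameter is at most $n$), using a first-moment computation to show that the accumulated edges account for a constant fraction of the edges of $G(m,p)$.

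\emph{Upper bounds.}
The key structural input from Erd\H{o}s--Gallai is that every $P_{n+1}$-free graph is $(n-1)$-degenerate, so for any $P_{n+1}$-free $H\subseteq G(N,p)$ there is an ordering $\pi=(v_1,\dots,v_N)$ of the vertices with
\[
e(H)\le\sum_{i=1}^{N}\min\bigl(n-1,\,d_i(\pi)\bigr),
\]
where $d_i(\pi)=|N_{G(N,p)}(v_i)\cap\{v_{i+1},\dots,v_N\}|$. For a fixed $\pi$ the $d_i(\pi)$ are independent $\mathrm{Bin}(N-i,p)$ variables, and their truncated sum concentrates around $\sum_i\mathbb{E}[\min(n-1,d_i(\pi))]$ by Bernstein's inequality; this expectation is $\Theta(pnN)$ in part~(i) (the truncation is essentially inactive, since typical forward degrees are well below $n$) and $\Theta((\omega/\log\omega)pnN)$ in part~(ii) (where the truncation controls the answer through the upper tail $\Pr[\mathrm{Bin}(N-i,p)\ge n]$). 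To handle the $N!$ orderings I would trade the naive union bound for a container/entropy argument: the relevant information in $\pi$ is only the truncated forward-degree profile, so two orderings yielding the same profile can be treated together, and the number of profiles is small enough to be absorbed into the Chernoff tail.

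\emph{Main obstacle.}
The delicate step is the upper bound in part~(ii), where the factor $\omega/\log\omega$ must be extracted from a careful estimate of $\Pr[\mathrm{Bin}(N-i,p)\ge n]$ --- roughly of order $((N-i)ep/n)^n$ --- integrated over $i$ in the transition region $i\approx N-n/p$ and combined with the count of orderings. Ensuring that neither the tail estimate nor the container count loses a spurious logarithmic factor will be the core technical hurdle, and the correct choice of the entropy parameterization for $\pi$ is the key decision that drives the final constants $8$ and $1/75$.
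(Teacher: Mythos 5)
Your lower bound for part (i) matches the paper exactly: take the union of the induced subgraphs on $N/n$ disjoint blocks of size $n$, and concentrate with Chernoff.

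Your lower bound for part (ii), however, has a real error. You claim that a BFS ball of radius $\lfloor n/2\rfloor$ is $P_{n+1}$-free ``because its diameter is at most $n$.'' That inference is false: small diameter does not preclude long paths. For a concrete counterexample inside exactly the sparse, ``tree-like'' regime you invoke, consider a cycle $C_{2n}$; the ball of radius $n/2$ around any vertex is an arc on $n+1$ vertices, i.e.\ literally a $P_{n+1}$. More generally, any single extra edge inside a BFS ball can create a path through the root that roughly doubles in length and exceeds $n$. If instead you keep only the BFS \emph{tree}, you do get $P_{n+1}$-freeness, but then you discard all non-tree edges and lose the constant fraction you need. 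The paper instead proves a dedicated dense-set lemma (Lemma~\ref{lemma: denseset}): it iteratively finds disjoint $n$-sets each of which spans $\Theta\bigl(\tfrac{\omega}{\log\omega}pn^2\bigr)$ edges --- i.e.\ an $n$-set that is \emph{denser} than the ambient random graph by the factor $\beta\approx\omega/\log\omega$ --- and then takes the union. That is the idea your plan is missing: the gain must come from an upper-tail event on an $n$-set, not from peeling large balls.

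Your upper bound takes a genuinely different route --- $(n-1)$-degeneracy via Erd\H{o}s--Gallai, versus the paper's DFS edge-decomposition (Lemma~\ref{lemma: DFS}) into $N/n$ pieces each supported on $2n$ vertices --- but as stated it has a gap you have not closed. The union bound over the $N!$ degeneracy orderings requires the tail probability $\Pr\bigl[\sum_i\min(n-1,d_i(\pi))\ge T\bigr]$ to beat $e^{-N\log N}$, and a sum of $N$ terms each bounded by $n$, with mean already close to the right order, will not naturally produce a $N\log N$-rate tail at the thresholds $T=18pnN$ or $T=8(\omega/\log\omega)pnN$. Your proposed fix, grouping orderings by ``truncated forward-degree profile,'' does not obviously help: the profile is a function of both $\pi$ and the realized graph, two orderings with the same profile can correspond to disjoint sets of graphs, and it is exactly the correspondence between profiles and orderings that you would need to control but have not. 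By contrast, the DFS decomposition localizes each piece $F_i$ to a pair of $n$-sets $(S_i,T_i)$, so the union bound in Lemmas~\ref{claim: upper1}--\ref{claim: upper2} runs over only $\binom{N}{n}^2\le(eN/n)^{2n}$ objects, which is exactly the scale the Chernoff/binomial tail can absorb. This reduction to structures of size $O(n)$ is the key leverage you would need to recover in the degeneracy framework, and it is not present in your sketch.
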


\begin{thm}\label{thm: largeN}
Let $N \geq ne^{2n}$. The following hold a.a.s.~as $n$ approaches infinity.
\begin{enumerate}[i)]
\item For $p\geq N^{-\frac{2}{5n}}$, we have
$
\frac{1}{16}nN\leq \mathrm{ex}(G(N, p), P_{n+1})\leq \frac12nN.
$

\item Let $\omega=\left(\log N\right)/(np)$. For $N^{-1} \leq p\leq N^{-\frac{2}{5n}}$, we have
\[
\frac{1}{75}\frac{\omega}{\log\omega}pnN\leq \mathrm{ex}(G(N, p), P_{n+1})\leq 8\frac{\omega}{\log\omega}pnN.
\]
\end{enumerate} 
\end{thm}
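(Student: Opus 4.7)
The proof of Theorem~\ref{thm: largeN} splits along the threshold $p=N^{-2/(5n)}$, which is chosen so that a $K_n$-factor exists in $G(N,p)$ above it and fails below. Throughout, both lower bounds share a template: exhibit a vertex-disjoint packing of cliques of appropriate size, which is automatically $P_{n+1}$-free provided the clique size is at most $n$.

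Part~(i). The upper bound is deterministic: any $P_{n+1}$-free subgraph of $G(N,p)$ is a $P_{n+1}$-free graph on $N$ vertices, so Theorem~\ref{EG} gives at most $(n-1)N/2<nN/2$ edges. For the lower bound I would show that a.a.s.\ $G(N,p)$ contains $\Omega(N/n)$ vertex-disjoint copies of $K_n$, whose union is $P_{n+1}$-free with $\Omega(nN)$ edges. The argument I have in mind is a random-equipartition trick: partition $[N]$ uniformly at random into blocks of size $n$, apply Janson's inequality to the indicators that a block induces a $K_n$, and clean up via a greedy deletion. In the range $N\geq ne^{2n}$, $p\geq N^{-2/(5n)}$, the expected number of clique-blocks is vast, which leaves ample room for the concentration to succeed.

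Part~(ii). For the lower bound I would again produce a disjoint-clique packing, now with cliques of size $k\asymp \log N/\log\omega$. Since $p\leq N^{-2/(5n)}$ forces $\log(1/p)=\Theta(\log\omega)$, this $k$ is (a constant multiple of) the typical clique number of $G(N,p)$ and satisfies $k\leq n$, so the packing remains $P_{n+1}$-free; its edge count is $\tfrac{k-1}{2}\cdot\tfrac{N}{k}\asymp N\log N/\log\omega$, which matches $(\omega/\log\omega)pnN$ up to a constant. Establishing the packing near the $K_k$-factor threshold calls again for a Janson-type second-moment argument combined with alteration, though slightly more delicate than in part~(i).

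The real obstacle is the matching upper bound in part~(ii). The plan is to invoke the hypergraph container method of Balogh--Morris--Samotij~\cite{BMS} and Saxton--Thomason~\cite{ST} to cover the family of $P_{n+1}$-free graphs on $[N]$ by a small collection of containers $\mathcal{C}$, each itself close to a disjoint union of cliques of size at most $n$ via a quantitative stability form of Erd\H{o}s--Gallai. A Chernoff bound on $|E(G(N,p))\cap E(\mathcal{C})|$ together with a union bound over the containers should yield the announced upper bound. The delicate calibration lies in making the stability lemma sharp enough to recover the $\omega/\log\omega$ factor and not merely a crude $\omega$, since the container structure has to reflect the extremal clique-size $k$ rather than $n$ itself.
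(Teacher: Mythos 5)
Your overall plan diverges substantially from the paper's, and the main steps are either substantially harder than what is needed or have genuine gaps. The paper never produces disjoint \emph{cliques}: the key tool is Lemma~\ref{lemma: denseset}, which, by an iterative degree-filtering construction, finds an $n$-set that is merely \emph{denser than average} by a factor $\beta$ (about $\frac{1-\alpha}{2}\beta p n^2$ edges), and this is then applied $N/4n$ times to disjoint pieces of the vertex set. Asking instead for $\Omega(N/n)$ vertex-disjoint copies of $K_n$ is a much stronger demand: Johansson--Kahn--Vu (which you implicitly invoke through the ``$K_n$-factor threshold'' intuition) is proved only for fixed $n$, whereas here $n\to\infty$, and there is no off-the-shelf near-$K_n$-factor result in that regime; Janson plus greedy deletion does not obviously deliver a packing because the $K_n$'s overlap heavily in edges. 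The same issue, amplified, afflicts your part~(ii) lower bound: when $p$ is small (for example $p$ close to $1/N$), there is no $K_k$-packing for any $k\ge 3$ -- the giant component does not even exist -- so ``cliques of size $\log N/\log\omega$'' are unavailable. The paper handles this regime via the trivial lower bound $\mathrm{ex}(G(N,p),P_{n+1})\ge N/15$ from Proposition~\ref{prop:lower} (isolated edges), noting that $\frac{\omega}{\log\omega}pnN\le 5N$ when $p\le N^{-1/5}$; your proposal has no fallback for this case. Also, your description of $N^{-2/(5n)}$ as the $K_n$-factor threshold is off by a power: it is the point at which the parameter $\beta$ in Lemma~\ref{lemma: denseset} saturates at $1/p$, not the factor threshold $\approx N^{-2/n}$.

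On the upper bound for part~(ii), you identify it as the ``real obstacle'' and propose a container + stability argument, but the paper's proof is actually the easy part and uses none of that machinery. Lemma~\ref{lemma: DFS} (a DFS consequence) decomposes any $P_{n+1}$-free subgraph $H$ into $N/n$ edge-disjoint pieces $F_i=E(S_i)\cup E(S_i,T_i)$ with $|S_i|=|T_i|=n$, and Lemma~\ref{claim: upper2} is a direct union bound over all such pairs $(S,T)$ showing each piece a.a.s.\ has at most $8\beta_1 n^2 p$ edges, where $\beta_1=\omega/\log\omega$; multiplying gives the upper bound. Your container route would be far heavier, would require a quantitative Erd\H{o}s--Gallai stability theorem you have not stated, and it is unclear it would reproduce the $\omega/\log\omega$ factor rather than a cruder bound. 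I'd encourage you to revisit the DFS decomposition idea -- it is the decisive structural input that makes the upper bound a short first-moment computation -- and to replace clique-packing by dense-set-packing for the lower bounds, which sidesteps the factor-threshold difficulty entirely.
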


\begin{rem}
Assume that $n$ is even. Then \eqref{eq: KKM} together with $\mathrm{ex}(K_N, C_n) \le nN^{1 + 2/n}$~\cite{pikhurko2012}
imply that
\begin{equation*}
\begin{split}
\mathrm{ex}(G(N, p), P_n)&\leq \mathrm{ex}(G(N, p), C_n)
\leq \left(\frac{\mathrm{ex}(K_N, C_n)}{\binom{N}{2}} + \beta \right)e(G(N, p))\\
&\leq \left(\frac{nN^{1 + 2/n}}{\binom {N}{2}} + \beta \right)\frac{pN^2}{2}
\sim pnN^{1 + 2/n} + \beta \frac{pN^2}{2},
\end{split}
\end{equation*}
which is weaker than our bounds. (Recall that $p\ge \frac{C}{n}$, where $C=C(\beta)$.)
Of course, there are some better upper bounds for $\mathrm{ex}(K_N, C_n)$, which could be used to make an improvement. However, since, in general, $\mathrm{ex}(K_N, C_n)$ behaves differently with $\mathrm{ex}(K_N, P_n)$ and is indeed much greater, Krivelevich, Kronenberg, and Mond's result~\cite{Krivelevich2019turan} and ours do not imply one another.
\end{rem}

\begin{rem}
One can run the same proof and show that Theorem~\ref{thm: largeN} holds when $n$ is a constant greater than 1 and $N$ approaches infinity. Note also that a result of Johansson, Kahn and Vu~\cite{johansson2008factors} on the threshold function of the property that $G(N, p)$ contains a $K_n$-factor ($n$ is a constant) implies $\mathrm{ex}(G(N, p), P_{n+1})=\frac12(n-1)N$ for $p=\Omega\left(N^{-2/n}(\log n)^{1/\binom{n}{2}}\right)$, whenever $N$ is divisible by~$n$.
Indeed, they determined the threshold function for containing a $H$-factor ($H$ is a fixed graph), which might be useful for further improving the above result.
\end{rem} 
 
We made no attempt to optimize the constants in the theorems. Throughout the paper, we omit all floor and ceiling signs whenever these are not crucial. All logarithms in this paper have base $e$. 

\section{Tools}
In this section, we list several results that we will use. The first lemma is a direct application of the depth first search algorithm (DFS), which has appeared in~\cite{DP2018}. Using the DFS algorithm in finding long paths was first introduced by Ben-Eliezer, Krivelevich, and Sudakov~\cite{ben2012size}, and then it became a particularly suitable tool in this topic.

\begin{lemma}[\cite{DP2018}]\label{lemma: DFS}
For every $P_{n+1}$-free graph $H$ on $N$ vertices, we can find a decomposition of edges into $\bigcup_{i=1}^{N/n}F_i$, where $F_i=E(S_i)\cup E(S_i, T_i)$ for two disjoint sets $S_i, T_i\subseteq[N]$ with $|S_i|=|T_i|=n$. 
\end{lemma}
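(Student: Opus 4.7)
The plan is to run depth-first search on $H$ and order the vertices by postorder (finish time). Let $v_1,\dots,v_N$ be the vertices in the order in which they are popped from the DFS stack, restarting DFS from an unvisited vertex whenever the stack empties so that disconnected graphs are handled uniformly. The driving invariant is that the current stack always forms a path in $H$, so since $H$ is $P_{n+1}$-free the stack contains at most $n$ vertices throughout. For each $i$, let $P_i$ be the set of vertices on the stack immediately after $v_i$ is popped; then $|P_i|\le n-1$. The central observation is that whenever $v_iv_j\in E(H)$ with $i<j$ one has $v_j\in P_i$, because at the moment $v_i$ is popped all its neighbors must have already been visited (otherwise DFS would push one), while $v_j$ has not yet been popped and hence is still on the stack.

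Next I would set $S_k=\{v_{(k-1)n+1},\dots,v_{kn}\}$ for $k=1,\dots,N/n$ and verify the subclaim that $P_i\subseteq S_k\cup P_{kn}$ for every $v_i\in S_k$. Indeed, any $w\in P_i\setminus S_k$ must be popped strictly later than $v_{kn}$, so $w$ remains on the stack continuously from some time before $v_i$ is popped until after $v_{kn}$ is popped; in particular $w$ is still on the stack just before $v_{kn}$ is popped, and since $w\neq v_{kn}$ this forces $w\in P_{kn}$. Taking $T_k$ to be $P_{kn}$ padded by arbitrary vertices of $[N]\setminus(S_k\cup P_{kn})$ so that $|T_k|=n$ (which is possible because $|P_{kn}|\le n-1$ and $N\ge 2n$), every edge $v_iv_j$ with $i<j$ and $v_i\in S_k$ satisfies $v_j\in S_k\cup T_k$. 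Assigning each edge to the group containing its lower-indexed endpoint then yields a partition $E(H)=\bigcup_{k}F_k$ with $F_k\subseteq E(S_k)\cup E(S_k,T_k)$, as required.

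The only substantive step is the containment $P_i\subseteq S_k\cup P_{kn}$, which amounts to a careful bookkeeping of how the stack evolves between the pops of $v_{(k-1)n+1}$ and $v_{kn}$; everything else, namely the bound on the stack size coming from $P_{n+1}$-freeness, the restart convention for handling components of $H$, and the padding of $T_k$ to size exactly $n$, is standard DFS manipulation.
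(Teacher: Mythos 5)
Your proof is correct and uses essentially the same DFS-based argument that underlies the cited lemma from~\cite{DP2018}: the stack of active vertices is always a path (hence of length at most $n$ by $P_{n+1}$-freeness), every edge joins a vertex to one still on the stack at the moment the first endpoint finishes, and chunking the finish order into blocks of $n$ with the residual stack as $T_k$ gives the desired decomposition. The bookkeeping step $P_i\subseteq S_k\cup P_{kn}$, the disjointness $P_{kn}\cap S_k=\emptyset$, and the padding of $T_k$ to size $n$ (using $|P_{kn}|\le n-1$ and $N\ge 2n$, the latter implicit in the statement) are all handled correctly.
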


We also need the following form of Chernoff's bound.

\begin{lemma}[Chernoff's Bound]\label{chernoff}
Let $X=\sum_{i=1}^n X_i$, where $X_i=1$ with probability $p_i$ and $X_i=0$ with probability $1 - p_i$, and all $X_i$'s are independent. Let $\mu=\mathbb{E}(X)=\sum_{i=1}^np_i$. Then, for all $0<\delta<1$,
\[
\mathbb{P}(X \leq (1 - \delta)\mu) \leq e^{-\mu\delta^2/2}.
\]
\end{lemma}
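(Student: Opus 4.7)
The plan is to use the standard Cram\'er--Chernoff exponential moment method. First I would bound the lower tail by passing to the exponential of $-tX$ for some parameter $t>0$ to be chosen later, applying Markov's inequality:
\[
\mathbb{P}(X\leq (1-\delta)\mu)=\mathbb{P}\bigl(e^{-tX}\geq e^{-t(1-\delta)\mu}\bigr)\leq e^{t(1-\delta)\mu}\,\mathbb{E}[e^{-tX}].
\]
Because the $X_i$ are independent, the moment generating factor splits as
$\mathbb{E}[e^{-tX}]=\prod_{i=1}^{n}\bigl(1-p_i+p_i e^{-t}\bigr)=\prod_{i=1}^{n}\bigl(1-p_i(1-e^{-t})\bigr)$.

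Next I would use the elementary inequality $1+x\leq e^{x}$ to get $1-p_i(1-e^{-t})\leq \exp(-p_i(1-e^{-t}))$, whence $\mathbb{E}[e^{-tX}]\leq \exp(-\mu(1-e^{-t}))$. Substituting back,
\[
\mathbb{P}(X\leq (1-\delta)\mu)\leq \exp\bigl(t(1-\delta)\mu-\mu(1-e^{-t})\bigr).
\]
Now I would optimize by choosing $t=-\log(1-\delta)>0$, so that $e^{-t}=1-\delta$ and $1-e^{-t}=\delta$. The exponent simplifies to $-\mu\bigl[\delta+(1-\delta)\log(1-\delta)\bigr]$.

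The only nontrivial step is to verify that $\delta+(1-\delta)\log(1-\delta)\geq \delta^{2}/2$ for $0<\delta<1$. Setting $f(\delta):=\delta+(1-\delta)\log(1-\delta)-\delta^{2}/2$, one checks $f(0)=0$ and $f'(\delta)=-\log(1-\delta)-\delta\geq 0$ on $(0,1)$, since the Taylor series of $-\log(1-\delta)$ is $\delta+\delta^{2}/2+\delta^{3}/3+\cdots \geq \delta$. This is the one ``hard'' inequality in the argument, but it is purely calculus and presents no real obstacle. Combining the previous display with this bound yields $\mathbb{P}(X\leq (1-\delta)\mu)\leq e^{-\mu\delta^{2}/2}$, completing the proof.
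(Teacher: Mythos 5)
Your proof is correct and is the standard Cram\'er--Chernoff argument for the lower tail, including the right optimizing choice $t=-\log(1-\delta)$ and a clean verification that $\delta+(1-\delta)\log(1-\delta)\ge\delta^2/2$. The paper simply cites this bound as a standard tool and gives no proof of its own, so there is nothing to compare against; your derivation is the one any textbook would give.
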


The third lemma is a key ingredient of our proof, which is used to find dense subsets in random graphs. This may be of independent interest. 

\begin{lemma}\label{lemma: denseset}
For $N>2n$, $0<p< 1$ and a constant $0<\alpha\leq 1/2$, let $r=N/n$ and choose an arbitrary $\beta$ satisfying
\begin{equation}\label{eq:beta}
\max\left\{2\log(2e),\ \frac{2}{\alpha np}\log\left(\frac{1}{\alpha np}\right)\right\} \leq 2\beta\log \beta \le \min\left\{2\left(\frac{1}{p}\right)\log \left(\frac{1}{p}\right),\ \frac{1}{np}\left(\log r - \log\alpha 2^{\frac{1}{\alpha}}\right)\right\}.
\end{equation}
Then there exists a positive constant $c=c(\alpha)$ such that with probability at least $1-\exp(-c r^\alpha n)$ there exists an $n$-set in $G(N, p)$ with at least $\left(\frac{1 - \alpha}{2}\right)\beta pn^2$ edges.
\end{lemma}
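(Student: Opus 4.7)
The strategy is to partition $[N]$ into $K := r^{\alpha}$ disjoint blocks $V_1, \dots, V_K$ of equal size $M := n r^{1-\alpha}$, and to show that within a single such block one can find a dense $n$-subset except on an event of probability at most $e^{-c(\alpha) n}$; since these $K$ events depend on disjoint edges of $G(N,p)$ they are mutually independent, which multiplies into the desired $\exp(-c(\alpha)\, r^{\alpha} n)$ bound on the overall failure probability.

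\textbf{First moment inside a block.} Fix a block $V_j$ and let $Y_j$ count the $n$-subsets $S \subseteq V_j$ with $e(S) \ge t := \frac{1-\alpha}{2}\beta p n^2$. For fixed $S$, $e(S) \sim \mathrm{Bin}(\binom{n}{2}, p)$; combining the elementary lower bound $\mathbb{P}(\mathrm{Bin}(m,p)\ge k) \ge \binom{m}{k} p^k (1-p)^{m-k}$ with $\binom{M}{n} \ge (M/n)^n = r^{(1-\alpha)n}$ gives
\[
\log \mathbb{E}\, Y_j \;\ge\; (1-\alpha)\, n \log r \;-\; \mu\, \beta \log \beta \;-\; O(\mu + n),
\]
where $\mu = \binom{n}{2} p$. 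The upper bound on $\beta$ in \eqref{eq:beta} implies $\mu \beta \log \beta \le \tfrac{n}{4} \log r + O(n)$, so combined with the restriction $\alpha \le 1/2$ there is a slack of order $n \log r$ and $\mathbb{E}\, Y_j$ is exponentially large. The lower bound on $\beta$ in \eqref{eq:beta} (via the $2\log(2e)$ term) guarantees $\beta \ge e$, so $t \ge (1+\Omega(1))\mu$ and the estimate is genuinely in the upper-tail regime.

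\textbf{From first moment to tail bound; main obstacle.} To promote $\mathbb{E}\, Y_j \gg 1$ to $\mathbb{P}(Y_j = 0) \le e^{-c(\alpha) n}$, I would apply Janson's inequality to the family of monotone increasing events $B_{S,E}$ indexed by pairs $(S,E)$ with $|S|=n$, $E \subseteq \binom{S}{2}$, $|E|=t$, where $B_{S,E}$ is the event that every edge of $E$ is present in $G$; clearly $Y_j > 0$ iff some $B_{S,E}$ holds. Janson then bounds $\mathbb{P}(Y_j = 0)$ by $\exp\!\bigl(-(\mathbb{E}\, Y_j)^2 / (\mathbb{E}\, Y_j + \Delta)\bigr)$, so the crux is to control $\Delta = \sum \mathbb{P}(B_{S,E} \cap B_{S',E'})$ over pairs sharing at least one edge. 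Splitting the sum by $i := |S \cap S'|$ (for $2 \le i \le n$), one should show that each overlap class contributes at most $O(\mathbb{E}\, Y_j)$: the lower bound $2\beta\log\beta \ge \tfrac{2}{\alpha np}\log(1/(\alpha np))$ in \eqref{eq:beta} controls the near-independent (small-$i$) contribution, while the upper bound $2\beta\log\beta \le \tfrac{1}{np}\bigl(\log r - \log(\alpha\, 2^{1/\alpha})\bigr)$ controls the heavy-overlap (large-$i$) contribution. The main obstacle is precisely this overlap bookkeeping: verifying the $O(\mathbb{E}\, Y_j)$ estimate uniformly across the parameter range permitted by \eqref{eq:beta}. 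The appearance of $\alpha$ on both sides of \eqref{eq:beta} is exactly what encodes the trade-off between the number $K = r^{\alpha}$ of independent blocks one can afford and the per-block concentration strength required.
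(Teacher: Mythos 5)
Your plan is genuinely different from the paper's. The paper builds the dense $n$-set constructively: starting from $B_0=[N]$, it iteratively peels off $\alpha n$-sets $A_i$ and keeps the vertices with abnormally large degree $\geq \beta\alpha np$ into $A_i$, using Chernoff's bound at each step to keep $|B_i|$ from collapsing, and after $1/\alpha - 1$ rounds assembles $A = A_1\cup\cdots\cup A_{1/\alpha}$ with $\binom{1/\alpha}{2}\alpha n\cdot\beta\alpha np$ edges. That argument is completely elementary (one-sided Chernoff, no correlation analysis), and the $r^\alpha$ in the exponent comes from the size of $B_{1/\alpha - 1}$, not from splitting into independent pieces. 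Your route instead partitions $[N]$ into $r^\alpha$ edge-disjoint blocks of size $nr^{1-\alpha}$, reduces the problem to a per-block existence statement with failure probability $e^{-cn}$, and then amplifies by independence. That is a legitimately different decomposition, and the ``boosting by independence'' step is clean.

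However, the proposal has a genuine gap exactly where you flag it: the Janson/$\Delta$ bookkeeping is not carried out, and it is not a routine omission. Several things make it delicate. First, $\mathbb{E}\,Y_j$ (the expected number of dense $n$-sets) is not the $\mu$ that enters Janson's inequality; Janson's $\mu$ is $\sum_{S,E}\mathbb{P}(B_{S,E}) = \sum_S \mathbb{E}\binom{e(S)}{t}$, which can be much larger, and $\Delta$ must be compared against this quantity, not against $\mathbb{E}\,Y_j$. Second, the heaviest contributions to $\Delta$ come from pairs $(S,E),(S,E')$ with the same $S$ and $|E\cap E'|$ close to $t$ (near-identical edge sets), and a correct bound must handle all overlap classes $|S\cap S'|=i$, $|E\cap E'|=\ell$ simultaneously; asserting that ``each overlap class contributes $O(\mathbb{E}\,Y_j)$'' is not by itself enough, since one must also sum over classes and compare to $\mu^2$. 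Third, your first-moment slack is tighter than the notation $O(\mu + n)$ suggests: the term $-\log(1-p)^{\binom{n}{2}-t} \approx 2\mu$ is of the same order $\Theta(n\log r)$ as the main term, and in the extreme case $\alpha = 1/2$, $np$ near its maximum allowed by \eqref{eq:beta}, the positivity of $\log\mathbb{E}\,Y_j$ survives only by a small explicit constant (roughly $0.08\,n\log r$). This must be verified rather than absorbed into a big-$O$, because if the first moment itself were $o(1)$ the per-block argument would fail before Janson even enters. None of these are necessarily fatal, but together they constitute the actual content of the proof, and in the present form the argument stops precisely at the point where the hypotheses in \eqref{eq:beta} need to be used quantitatively.
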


\begin{rem}
Lemma~\ref{lemma: denseset} essentially states that given $N, n$, for some range of $p$, we can find an $n$-vertex subgraph, which is denser than the random graph by some factor $\beta$. For instance, as it will be explained in the proof of Theorem~\ref{thm: smallN}~(ii),
when $135n\leq N\leq ne^{2n}$, we can choose $\frac{\log r}{n r^{1/5}}\leq p\leq \frac{\log r}{6n}$, so that $2\beta\log \beta=\frac{1}{np}\log\left(\frac{3}{8}r\right)$ satisfying~(\ref{eq:beta}). Note that if $p\ll \frac{\log r}{n}$, we have $\beta=\omega(1)$, and therefore the graph we produce here is much denser than the random graph.
\end{rem}
\begin{proof}
One can check that the function $f(x)=x \log x$ is non-negative and increasing for $x\ge 1$. Thus, $\log(2e)\leq f(\beta) \le f(1/p)$ implies that 
\begin{equation}\label{betarange}
\max\left\{2, \frac{1}{\alpha np}\right\}<\beta \le 1/p.
\end{equation}

Let $B_0=[N]$. We will construct the desired set iteratively. In each step, take an arbitrary subset $A_i\subseteq B_{i-1}$ of size $\alpha n$, and let
\[
B_i=\{v\in B_{i-1}\setminus A_i:\ \mathrm{deg}(v, A_i)\geq \beta\alpha np\}.
\]
We will show that a.a.s.~we can continue this process $\lceil\frac{1}{\alpha}\rceil$ steps. For convenience, in the rest of the proof, we ignore all floor and ceiling signs.
\begin{claim}\label{claim: 2}
$|B_i|\geq \frac{rn}{2^i}\exp\left(-2i\beta\log\beta\cdot \alpha np\right)$, for all $0\leq i\leq \frac{1}{\alpha} -1$ with probability at least $1-\exp(-\Omega(r^\alpha n))$.
\end{claim}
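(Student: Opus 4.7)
I would proceed by induction on $i$, the base case $i=0$ being immediate since $|B_0|=N=rn$ and the exponential factor equals $1$.

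For the inductive step, the first observation is one of independence: the sets $A_1,A_2,\ldots$ are pairwise disjoint (each $A_j\subseteq B_{j-1}$, and $B_{j-1}$ excludes all $A_{j'}$ with $j'<j$), and in round $j$ only edges between $A_j$ and $B_{j-1}\setminus A_j$ are inspected. Hence the edges between $A_i$ and $B_{i-1}\setminus A_i$ on which round $i$ depends have both endpoints outside $A_1\cup\cdots\cup A_{i-1}$ and were not inspected in earlier rounds. Conditioning on the past, for each $v\in B_{i-1}\setminus A_i$ we therefore have $\deg(v,A_i)\sim \mathrm{Bin}(\alpha n,p)$ independently across $v$. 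Retaining only the $k=\beta\alpha np$ term of the binomial tail and using $\binom{m}{k}\geq(m/k)^k$, $\log(1-p)\geq -p-p^2$, together with $2\beta\log\beta\geq 2\log(2e)$ from \eqref{eq:beta}, the probability that $v$ qualifies for $B_i$ is at least
\[
p^*\;\geq\;\exp\bigl(-2\beta\log\beta\cdot\alpha np\bigr).
\]

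Thus $|B_i|$ is the sum of $|B_{i-1}|-\alpha n$ independent Bernoulli's each succeeding with probability $\geq p^*$. The upper bound on $2\beta\log\beta$ in \eqref{eq:beta} forces $|B_{i-1}|\geq 2\alpha n$ for all $i\leq 1/\alpha-1$, so $|B_{i-1}|-\alpha n\geq|B_{i-1}|/2$, and the inductive hypothesis gives
\[
\mu:=\mathbb{E}[|B_i|]\;\geq\;\frac{|B_{i-1}|}{2}\cdot p^*\;\geq\;\frac{rn}{2^i}\exp\bigl(-2i\beta\log\beta\cdot\alpha np\bigr).
\]
Chernoff's bound (Lemma~\ref{chernoff}) with $\delta=1/2$ then yields $\Pr(|B_i|\leq\mu/2)\leq\exp(-\mu/8)$; the lingering factor of $2$ between $\mu/2$ and the lower bound stated in the claim is absorbed into the constants, either by a slightly sharper estimate on $p^*$ (permitted by the slack in \eqref{eq:beta}) or by taking a smaller Chernoff deviation. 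To control the tail, the upper bound $\exp(2\beta\log\beta\cdot np)\leq r/(\alpha 2^{1/\alpha})$ from \eqref{eq:beta}, raised to the power $i\alpha\leq 1-\alpha$, produces $\mu\geq c(\alpha)\,r^\alpha n$, whence $\Pr(\text{round $i$ fails})\leq \exp(-\Omega(r^\alpha n))$, and a union bound over the $O(1/\alpha)=O(1)$ rounds completes the claim.

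The hard part will be the constant bookkeeping: ensuring that the factor-of-$2$ losses from $|B_{i-1}|-\alpha n\geq|B_{i-1}|/2$ and from the Chernoff step collapse into the single $2^i$ factor asserted in the claim rather than a weaker $4^i$. This will likely require a sharper binomial estimate on $p^*$ (retaining more than one term of the tail) or a more careful choice of the Chernoff deviation. A secondary technical point is verifying the independence observation above rigorously, which rests crucially on the pairwise disjointness of the $A_j$'s together with the fact that only edges incident to $A_j$ are inspected in round $j$.
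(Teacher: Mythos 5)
Your proposal follows essentially the same route as the paper: induction, a lower bound on the single-vertex success probability by keeping only the $k=\beta\alpha np$ term of the binomial tail, and Chernoff's inequality. The constant bookkeeping that you correctly flag as the hard part is resolved in the paper exactly along the lines you guess, by sharpening both the ``$|B_{i-1}|-\alpha n$'' step and the Chernoff deviation simultaneously: the upper bound in \eqref{eq:beta} is in fact strong enough to give $|B_{i-1}|\ge 4\alpha n$ (not merely $2\alpha n$), so $|B_{i-1}|-\alpha n\ge \tfrac{3}{4}|B_{i-1}|>|B_{i-1}|/\sqrt{2}$, and Chernoff is then applied with $\delta=1-1/\sqrt{2}$ rather than $\delta=1/2$. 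With $p^*\ge\exp(-2\beta\log\beta\cdot\alpha np)$ exactly as you have it, the two $\sqrt{2}$ losses multiply to the single factor of $2$ that closes the induction, so no sharper binomial estimate on $p^*$ is needed. Note that your suggestion of ``a smaller Chernoff deviation'' alone does not suffice; both sharpenings must be applied together.
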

We prove it by induction on $i\ge 0$. For $i=0$, it is trivial. Suppose the statement holds for $i-1$. That means
\begin{equation}\label{eq:lem:ind}
|B_{i-1}| \ge \frac{rn}{2^{i-1}}\exp\left(-2(i-1)\beta\log\beta\cdot \alpha np\right)
\end{equation}
with probability at least $1-\exp(-\Omega(r^\alpha n))$. Furthermore, $0\leq i\leq \frac{1}{\alpha} -1$ yields that $(i-1)\alpha < i\alpha \le 1-\alpha < 1$ and hence,
\[
|B_{i-1}|\geq \frac{rn}{2^{\frac{1}{\alpha} -2}}\exp\left(-2\beta\log\beta\cdot np\right)
\geq\frac{rn}{2^{\frac{1}{\alpha} -2}}\exp\left(-\left(\log r - \log\alpha 2^{\frac{1}{\alpha}}\right)\right)=4\alpha n,
\]
consequently
\[
|B_{i-1}| - \alpha n \ge \frac{3}{4} |B_{i-1}| > \frac{|B_{i-1}|}{\sqrt{2}}.
\]
Then, the expected size of $B_i$ is 
\[
\mathbb{E}(|B_i|)=(|B_{i-1}| - \alpha n)\mathbb{P}(\mathrm{deg}(v, A_i)\geq \beta \alpha np)
\geq \frac{1}{\sqrt{2}}|B_{i-1}|\binom{\alpha n}{\beta \alpha np}p^{\beta \alpha np}(1-p)^{\alpha n}.
\]
Due to (\ref{betarange}), we get that $p \le 1/\beta \le 1/2$ and $\beta \alpha np\geq 1$. Now we use $\binom{\alpha n}{\beta \alpha np} \ge \left( \frac{\alpha n}{\beta \alpha np} \right)^{\beta \alpha np} = 
\left( \frac{1}{\beta p} \right)^{\beta \alpha np}$ and the inequality $1-p \ge (2 e)^{-p}$, which is valid for $0 \le p \le 1/2$. 
Thus, 
\begin{align*}
\mathbb{E}(|B_i|)&=(|B_{i-1}| - \alpha n)\mathbb{P}(\mathrm{deg}(v, A_i)\geq \beta \alpha np)
\geq \frac{1}{\sqrt{2}}|B_{i-1}|\exp(-(\beta\log\beta + \log 2e)\alpha np)\\
&\geq \frac{1}{\sqrt{2}}|B_{i-1}|\exp(-2\beta\log\beta \cdot \alpha np).
\end{align*}
Observe that conditioning on~\eqref{eq:lem:ind} gives 
\begin{align*}
\mathbb{E}(|B_i|) &\ge \frac{1}{\sqrt{2}}|B_{i-1}|\exp(-2\beta\log\beta \cdot \alpha np)\\
&\ge \frac{1}{\sqrt{2}}   \cdot \frac{rn}{2^{i-1}}\exp\left(-2(i-1)\beta\log\beta\cdot \alpha np\right) \cdot \exp(-2\beta\log\beta \cdot \alpha np)\\
&= \frac{1}{\sqrt{2}}   \cdot \frac{rn}{2^{i-1}} \exp\left(-2i\beta\log\beta\cdot \alpha np\right)
\geq \frac{1}{\sqrt{2}}   \cdot \frac{rn}{2^{i-1}} \exp\left(-\alpha i \left(\log r - \log\alpha 2^{\frac{1}{\alpha}}\right)\right)\\
&\ge \frac{1}{\sqrt{2}}   \cdot \frac{rn}{2^{\frac{1}{\alpha}-1}} \exp\left(-(1-\alpha) \left(\log r - \log\alpha 2^{\frac{1}{\alpha}}\right)\right)
= \Omega(r^\alpha n),
\end{align*}
which goes to infinity together with $n$. Therefore, Chernoff's bound (applied with $\delta=1-1/\sqrt{2}$) yields that with probability at least $1-\exp(-\Omega(r^\alpha n))$ we have
\[
|B_i|\geq \frac{1}{\sqrt{2}}\mathbb{E}(|B_i|)\geq\frac{1}{2}|B_{i-1}|\exp(-2\beta\log\beta \cdot \alpha np)\geq \frac{rn}{2^i}\exp\left(-2i\beta\log\beta \cdot \alpha np\right),
\]
where the last inequality follows from~\eqref{eq:lem:ind}. 
\newline

Now we finish the proof of Lemma~\ref{lemma: denseset}.
Claim~\ref{claim: 2} gives that with probability at least $1-\exp(-\Omega(r^\alpha n))$ the set $B_{\frac{1}{\alpha} -1}$ exists and satisfies
\[
\left|B_{\frac{1}{\alpha} -1}\right|\geq \frac{rn}{2^{\frac{1}{\alpha} -1}}\exp\left(-\left(\log r - \log\alpha 2^{\frac{1}{\alpha}}\right)\right)=2\alpha n>\alpha n.
\]
Therefore, we can find disjoint sets $A_1, \ldots, A_{1/\alpha}$ of size $\alpha n$ with $e(A_i,A_j)\geq \alpha n\cdot \beta \alpha np$ for all $1\leq i<j\leq 1/\alpha$.
Let $A=\bigcup_{i=1}^{1/\alpha} A_i$. Then we have $|A|=n$ and
\[
e(A)\geq \binom{1/\alpha}{2}\alpha n\cdot \beta \alpha np=\left(\frac{1 - \alpha}{2}\right)\beta pn^2.
\]
\end{proof} 

We also present the following two probabilistic results which will be used later.
\begin{lemma}\label{claim: upper1}
Assume that $np\geq \left(\log \frac{N}{n}\right)/6$ and $N\geq 3n$. Then a.a.s.~for every two disjoint sets $S, T\subseteq[N]$, $|S|=|T|=n$, the number of edges in $G\in G(N, p)$ induced by $S\cup T$ with at least one endpoint in $S$ is at most $18n^2p$. 
\end{lemma}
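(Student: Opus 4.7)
The plan is to apply a union bound over all disjoint pairs $(S,T)$ of $n$-sets in $[N]$ to a Chernoff upper-tail estimate. Fix such a pair and let $X=X(S,T)$ denote the number of edges of $G(N,p)$ contained in $S\cup T$ with at least one endpoint in $S$. Then $X$ is a sum of $\binom{n}{2}+n^2\le\tfrac{3}{2}n^2$ independent $\mathrm{Bernoulli}(p)$ variables, so its mean satisfies $\mu:=\mathbb{E}[X]\le\tfrac{3}{2}n^2p$. In particular, the target threshold $18n^2p$ is at least $12\mu$, which places us well inside the multiplicative upper-tail regime.

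Next, I would invoke the standard Chernoff bound in the form $\mathbb{P}(X\ge t)\le (e\mu/t)^t$ for $t\ge\mu$, which yields
\[
\mathbb{P}\bigl(X(S,T)\ge 18n^2p\bigr)\le\left(\frac{e}{12}\right)^{18n^2p}\le\exp(-26\, n^2p).
\]
Combining this with the crude count $\binom{N}{n}\binom{N-n}{n}\le(eN/n)^{2n}$ for the number of ordered disjoint pairs and taking a union bound gives
\[
\mathbb{P}(\exists\text{ bad pair})\le\exp\!\bigl(2n\log(eN/n)-26\, n^2p\bigr).
\]
Under the hypotheses $np\ge(\log(N/n))/6$ and $N\ge 3n$ one has $26n^2p\ge\tfrac{26}{6}n\log(N/n)$, while $2n\log(eN/n)\le 2n\log(N/n)+2n$, and since $\log(N/n)\ge\log 3>1$ the difference grows at least linearly in $n$, so the union-bound estimate is $o(1)$.

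The only real issue is constant-chasing: the factor $18$ in the statement is tuned so that the Chernoff exponent $\log(12/e)\cdot 18\approx 26$ dominates the union-bound exponent $2\log(eN/n)$ under only the hypothesis $np\ge(\log(N/n))/6$, and any significantly smaller constant in place of $18$ would require a correspondingly stronger lower bound on $np$. No tighter handling of $X(S,T)$ is needed, because $X$ is an honest sum of independent indicators and both $\mu$ and the number of pairs admit clean one-line estimates; once the three constants $18$, $\tfrac{3}{2}$ and $\tfrac{1}{6}$ are matched as above, the proof is immediate.
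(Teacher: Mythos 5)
Your proposal is correct and is essentially the paper's own proof: both fix a pair $(S,T)$, bound the upper tail of $X(S,T)$ (the paper via the direct estimate $\binom{3n^2/2}{18n^2p}p^{18n^2p}\le(e/12)^{18n^2p}$, you via the equivalent Chernoff form $(e\mu/t)^t$), union-bound over at most $(eN/n)^{2n}$ pairs, and then use $np\ge\frac16\log(N/n)$ and $N\ge 3n$ to make the exponent negative and linear in $n$. The only cosmetic difference is that the paper explicitly disposes of the degenerate case $3n^2/2\le 18n^2p$ before applying the binomial bound, a step your Chernoff formulation handles implicitly.
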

\begin{proof}
Let $X_{S, T}$ be the number of edges in $G(N, p)$ with one endpoint in $S$ and one endpoint in $T$. Observe that $\mathbb{E}(X_{S, T})=\left(\frac32 - \frac1{2n}\right)n^2p$. Note that if $3n^2/2 \le 18n^2p$, then the statement is trivial. Otherwise, 
the union bound implies that
\[
\begin{split}
\mathbb{P}(\exists S, T,  X_{S, T}\geq 18n^2 p)&\leq \binom{N}{n}^2 \binom{3n^2/2}{18n^2p}p^{18n^2p}
\leq\left(\frac{Ne}{n}\right)^{2n}\left(\frac{e}{12}\right)^{18n^2p}\\
&=\exp\left(-n\left(18np\log\left(\frac{12}{e}\right) - 2\log\left( \frac{Ne}{n}\right)\right)\right).
\end{split}
\]
Since $np\geq \left(\log \frac{N}{n}\right)/6$ and $N\geq 3n$, we obtain that
\begin{multline*}
18np\log\left(\frac{12}{e}\right) - 2\log\left( \frac{Ne}{n} \right)\geq
3\log\left( \frac{12}{e}\right) \log \left( \frac{N}{n} \right) - 2\log\left( \frac{Ne}{n} \right)\geq\\
 4\log\left( \frac{N}{n}\right)-2\log \left(\frac{N}{n}\right)-2=2\log \left(\frac{N}{n}\right)-2\geq 2\log 3-2\geq 0.19.
\end{multline*}
Finally, we conclude that $\mathbb{P}(\exists S, T,  X_{S, T}\geq 18n^2 p)\leq \exp(-0.19 n)=o(1)$, which completes the proof.
\end{proof}

\begin{lemma}\label{claim: upper2}
Let $\beta=\frac{\frac{1}{np}\log\frac{N}{n}}{\log\left(\frac{1}{np}\log\frac{N}{n} \right)}>1$ and $m=8\beta n^2p$. Then a.a.s.~for every two disjoint sets $S, T\subseteq[N]$, $|S|=|T|=n$, the number of edges induced by $S\cup T$ with at least one endpoint in $S$ is at most $m$. 
\end{lemma}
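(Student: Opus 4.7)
The plan is to proceed by a union bound over all disjoint pairs of $n$-sets in $[N]$, mirroring the strategy of Lemma~\ref{claim: upper1} but sharpening the count to leverage the particular choice of $\beta$. First, for a fixed pair $(S,T)$ of disjoint $n$-sets, the number $X_{S,T}$ of edges of $G(N,p)$ induced by $S\cup T$ with at least one endpoint in $S$ is binomially distributed on at most $\binom{n}{2} + n^2 \le 3n^2/2$ potential edges, and I would bound
$$
\mathbb{P}(X_{S,T} \ge m) \le \binom{3n^2/2}{m} p^m \le \left(\frac{3en^2 p}{2m}\right)^m = \left(\frac{3e}{16\beta}\right)^m.
$$
The union bound over the $\binom{N}{n}^2 \le (Ne/n)^{2n}$ pairs $(S,T)$ therefore reduces the claim to showing
$$
2n\log(Ne/n) - 8\beta n^2 p \, \log\!\left(\frac{16\beta}{3e}\right) = -\Omega(n).
$$

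The key step is to relate $\beta\log\beta$ back to the random-graph parameters. Writing $\omega = \log(N/n)/(np)$, the defining identity $\beta = \omega/\log\omega$ yields by direct computation
$$
\beta \log \beta \;=\; \omega\left(1 - \frac{\log\log\omega}{\log\omega}\right) \;\ge\; \left(1 - \tfrac{1}{e}\right)\omega,
$$
where the inequality uses the elementary bound $(\log x)/x \le 1/e$ applied to $x = \log\omega$. Multiplying by $8n^2 p$ and recalling $\omega \cdot np = \log(N/n)$ gives
$$
8n^2 p \cdot \beta \log \beta \;\ge\; 8\left(1-\tfrac{1}{e}\right) n\log(N/n) \;>\; 5\, n\log(N/n).
$$
Since $16/(3e) > 1$, one has $\log(16\beta/(3e)) \ge \log\beta$, so the exponent above is at most $2n(\log(N/n)+1) - 5n\log(N/n)$, which is $-\Omega(n)$ whenever $N/n$ is bounded away from $1$ (as it is in the intended regime $N \ge 3n$, where $\log(N/n) \ge \log 3$).

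The main obstacle, as I see it, is purely numerical: one must verify that the constant $8$ in the definition $m = 8\beta n^2 p$ is large enough to absorb simultaneously the loss of the factor $1-1/e$ in passing from $\omega$ to $\beta\log\beta$, and the $2n\log(Ne/n)$ term arising from the count of pairs $(S,T)$. The inequality $8(1-1/e) > 2$ provides exactly the needed slack, and beyond this bookkeeping I do not expect to need any ideas beyond those already used in Lemma~\ref{claim: upper1}. A minor edge case to handle is when $m > 3n^2/2$, in which case the conclusion is trivial since $S\cup T$ contains at most $3n^2/2$ such edges.
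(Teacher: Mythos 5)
Your proposal is correct and follows essentially the same route as the paper: a union bound over pairs $(S,T)$ combined with $\binom{3n^2/2}{m}p^m \le (3e/(16\beta))^m$ and a lower bound on $\beta\log\beta$ in terms of $\omega=\frac{1}{np}\log(N/n)$. The only cosmetic difference is that you bound $\beta\log\beta \ge (1-1/e)\omega$ via $\frac{\log\log\omega}{\log\omega}\le\frac1e$, whereas the paper uses $\log\beta\ge\log\sqrt{\omega}$ to get $\beta\log\beta\ge\frac12\omega$; both are elementary and suffice.
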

\begin{proof}
We assume $m< 3n^2/2$ since otherwise Lemma~\ref{claim: upper2} holds trivially. By a simple union bound, we obtain
\begin{align*}
\mathbb{P}(\exists S, T,  X_{S, T}\geq m)&\leq \binom{N}{n}^2 \binom{3n^2/2}{m}p^m
\leq\exp\left(2n\log \left( \frac{Ne}{n} \right) \right)\exp(-\log\beta \cdot m)\\
&= \exp\left(2n\log \left( \frac{Ne}{n} \right)-8\beta\log\beta \cdot n^2p\right).
\end{align*}
Now we bound from below $\beta\log\beta$ by
\begin{align*}
\beta\log\beta = \frac{\frac{1}{np}\log\frac{N}{n}}{\log\left(\frac{1}{np}\log\frac{N}{n} \right)}
  \log\left(\frac{\frac{1}{np}\log\frac{N}{n}}{\log\left(\frac{1}{np}\log\frac{N}{n} \right)} \right)
\ge \frac{\frac{1}{np}\log\frac{N}{n}}{\log\left(\frac{1}{np}\log\frac{N}{n} \right)} 
 {\log\sqrt{\frac{1}{np}\log\frac{N}{n}}}
= \frac{1}{2np}\log\left(\frac{N}{n} \right).
\end{align*}
Thus,
\begin{align*}
\mathbb{P}(\exists S, T,  X_{S, T}\geq m)&\le \exp\left(2n\log\left( \frac{Ne}{n}\right)-8\beta\log\beta \cdot n^2p\right)\\
&\leq \exp\left(2n\log\left( \frac{Ne}{n} \right)-\frac{4}{np}\log\left( \frac{N}{n} \right) \cdot n^2p\right)\\
&\leq\exp\left(-n\left(4\log\left(\frac{N}{n}\right)-2\log\left( \frac{Ne}{n} \right)\right)\right)=o(1),
\end{align*}
where the last inequality follows from $N\geq 3n$ as
$4\log\left( \frac{N}{n} \right)-2\log\left( \frac{Ne}{n} \right) = 2\log\left( \frac{N}{n} \right)-2 \ge 2 \log(3)-2 \ge 0.19.$
\end{proof}

\section{Proofs of the main results}\label{sec:proofs}

\subsection{Proof of Proposition~\ref{prop:lower}}
Let $G = (V,E) = G(N, p)$. 
We will count the number of isolated edges. For a given pair of vertices $e\in \binom{V}{2}$, let $X_e$ be an indicator random variable that takes value $1$ if $e$ is an isolated edge in~$G$. Set $X = \sum_{e}X_e$. Observe that $\Pr(X_e=1) = p(1-p)^{2(N-2)}$ and so
\[
\mathbb{E}(X) = \binom{N}{2} p(1-p)^{2(N-2)} \sim \binom{N}{2} p e^{-2pN} \ge \binom{N}{2} p e^{-2} \to\infty,
\]
by assumption.
Furthermore, since for vertex disjoint $e, f \in \binom{V}{2}$, $\Pr(X_e = X_f = 1) = p^2 (1-p)^{4(n-4)+4}$, we obtain that
\[
\mathbb{E}(X^2) = \mathbb{E}(X) + \sum_{e\cap f=\emptyset} \Pr(X_e = X_f = 1)
= \mathbb{E}(X) + 6\binom{N}{4}p^2 (1-p)^{4(N-4)+4}.
\]
Thus, 
\[
\frac{\mathbb{E}(X^2)}{\mathbb{E}(X)^2} = \frac{1}{\mathbb{E}(X)} + 
\frac{(N-2)(N-3)}{N(N-1)(1-p)^4}
\le \frac{1}{\mathbb{E}(X)} + \frac{1}{(1-p)^4}
\le \frac{1}{\mathbb{E}(X)} + \frac{1}{1-4p}
\]
and
\[
\frac{\mathrm{Var}(X)}{\mathbb{E}(X^2)} \le \frac{1}{\mathbb{E}(X)} + \frac{1}{1-4p} - 1 = \frac{1}{\mathbb{E}(X)} + \frac{4p}{1-4p}
= o(1),
\]
since $\mathbb{E}(X)\to\infty$ and also by assumption $p\to0$. Now Chebyshev's inequality yields that $X$ is concentrated around its mean and consequently a.a.s.~we have
\[
\mathrm{ex}(G(N, p), P_{n+1}) \ge (1+o(1))\mathbb{E}(X) = \Omega(pN^2).
\]
The upper bound easily follows from the fact that $\mathrm{ex}(G(N, p), P_{n+1})\leq e(G(N, p))$.

Finally observe that a.a.s.
\[
\mathrm{ex}(G(N, 1/N), P_{n+1}) \ge (1+o(1))\mathbb{E}(X) \ge (1+o(1))\binom{N}{2} \frac{1}{N} e^{-2}   \ge  N/15.
\]
\qed

\subsection{Proof of Theorem~\ref{thm: smallN}}
\textit{Proof of Theorem~\ref{thm: smallN}~(i).} This proof is by now quite standard which applies the DFS algorithm and the first moment method. Recall that $np\geq \left(\log \frac{N}{n}\right)/6$ and $N\geq 3n$. 

Observe that Lemma~\ref{claim: upper1} together with Lemma~\ref{lemma: DFS} imply that for every $P_{n+1}$-free subgraph $H$ of $G\in G(N, p)$ a.a.s.
\[
e(H)\leq \frac{N}{n}\cdot 18n^2p=18pnN,
\]
which establishes the upper bound.

For the lower bound, take an arbitrary vertex partition $[N]=\bigcup_{i=1}^{N/n}S_i$, where $|S_i|=n$ for all~$i$. Let $H$ be the subgraph of $G\in G(N, p)$ whose edge set is $\bigcup E(G[S_i])$. Clearly, $H$ is $P_{n+1}$-free. Note that $\mathbb{E}(e(H))=\frac{N}{n}\left(\frac12 - \frac1{2n}\right)n^2p=\left(\frac12 - \frac1{2n}\right)pnN$. By Chernoff's bound, 
\[
\mathbb{P}\left(e(H)\leq \frac14pnN\right)\leq \exp\left(- \Omega(pnN)\right)=o(1),
\]
since $pnN\to\infty$.
Therefore, a.a.s.~we have $\mathrm{ex}(G(N, p), P_{n+1})\geq e(H)\geq  \frac14pnN$.
\qed
\newline
\newline
\textit{Proof of Theorem~\ref{thm: smallN}~(ii).} We first show the upper bound.
Let $\beta_1=\frac{\frac{1}{np}\log\frac{N}{n}}{\log\left(\frac{1}{np}\log\frac{N}{n} \right)}$ and $m=8\beta_1 n^2p$. Since $np\leq \left(\log \frac{N}{n}\right)/6$, we know that $\beta_1>1$.

For every $P_{n+1}$-free subgraph $H$ of $G\in G(N, p)$, Lemma~\ref{lemma: DFS} and Lemma~\ref{claim: upper2} imply that a.a.s 
\[
e(H)\leq \frac{N}{n}\cdot m=8\beta_1 pnN=8\frac{\frac{1}{np}\log\frac{N}{n}}{\log\left(\frac{1}{np}\log\frac{N}{n} \right)} pnN,
\]
which establishes the upper bound.
\newline

For the lower bound, we shall divide the discussion into three cases. First, let us assume $N\leq 135n$. Together with $\frac{1}{np}\log\left(\frac{N}{n}\right)\geq 6\geq e$, we have
\[
\frac{\omega}{\log\omega}pnN = \frac{\log \left(\frac N n\right)}{\log\left(\frac{1}{np}\log\left(\frac{N}{n} \right)\right)}N
\le \log \left(\frac{N}{n}\right)N < 5N.
\]
Therefore, by (\ref{trilower}), we trivially have
\[
\mathrm{ex}(G(N, p), P_{n+1})\geq N/15 \geq \frac{1}{75}\frac{\omega}{\log\omega}pnN.
\]
Next, let us assume $p\le \log\left(\frac{N}{n}\right)/\left(n\left(\frac{N}{n}\right)^{1/5}\right)$.
Similarly, we complete the proof by observing that 
\[
\frac{\omega}{\log\omega}pnN = \frac{\log \left(\frac N n\right)}{\log\left(\frac{1}{np}\log\left( \frac{N}{n} \right)\right)}N
\le \frac{\log \left(\frac{N}{n}\right)}{\frac{1}{5}\log \left(\frac{N}{n}\right)}N =5N.
\]

It remains to prove the lower bound for the case when $N\geq 135n$ and
\begin{equation}\label{eq: prob}
\frac{ \log\left(\frac{N}{n}\right) }{ n\left(\frac{N}{n}\right)^{1/5}} \leq p\leq \frac{ \log(\frac{N}{n})}{6n}.
\end{equation}
Indeed, such range of $p$ only exists for $N\geq 6^5n$.
In this case, we will apply Lemma~\ref{lemma: denseset} repeatedly to find a dense subgraph with no $P_{n+1}$. Let 
\[
2\beta_2\log \beta_2=\min\left\{2\left(\frac{1}{p}\right)\log \left(\frac{1}{p}\right),\ \frac{1}{np}\log \left(\frac {3N}{8n}\right)\right\}.
\]
Since $N \leq ne^{2n}$ and $p\leq \log\left(\frac{N}{n}\right)/(6n)\leq \frac{1}{3}$, we have 
\[
2\left(\frac{1}{p}\right)\log \left(\frac{1}{p}\right)
\geq 2 \left(\frac{1}{p}\right) \log 3
> \frac{2}{p}
\geq \frac{1}{np}\log \left(\frac{3N}{8n}\right).
\]
Furthermore, since $N\ge 6^5n$, we obtain
\[
\log \left( \frac{3N}{8n} \right) 
\ge \log \left( \frac{3}{8} \right)+ \frac{1}{5} \log 6^5 + \frac{4}{5} \log \left( \frac{N}{n}\right) > \frac{4}{5} \log \left(\frac{N}{n}\right),
\]
and
\[
2\beta_2\log \beta_2 = \frac{1}{np}\log \left(\frac {3N}{8n}\right)\geq \frac{4}{5np}\log \left(\frac{N}{n}\right)>2\log(2e).
\]
Finally, observe that for $\alpha = 1/2$,
\[
\frac{1}{np} \log \left( \frac{3N}{8n} \right) 
\geq \frac{1}{np} \cdot 4 \log \left( \frac{2\left(\frac{N}{n}\right)^{1/5}}{\log\left(\frac{N}{n}\right)}\right)
\geq\frac{2}{\alpha np}\log\left(\frac{1}{\alpha np}\right),
\]
where the first inequality is given by $N\geq 135n$ and the last inequality follows from (\ref{eq: prob}).
Thus, we can iteratively apply Lemma~\ref{lemma: denseset} $N/4n$ times with $\alpha=\frac{1}{2}$ and $r=\frac{3N}{4n}$ and find $N/4n$ disjoint $n$-sets $A_i$, where a.a.s.~for all $i$
\[
e(A_i)\geq \left(\frac{1 - \alpha}{2}\right)\beta_2 pn^2
\geq \frac{1 - \alpha}{4}\frac{\frac{1}{np}\log\left( \frac{3N}{8n} \right)}{\log\left(\frac{1}{np}\log \left( \frac{3N}{8n} \right) \right)}pn^2
\geq \frac{1}{10}\frac{\frac{1}{np}\log\left( \frac{N}{n} \right)}{\log\left(\frac{1}{np}\log\left(\frac{N}{n}\right)\right)}pn^2.
\]

 Let $H$ be the subgraph of $G$ with vertex set $\bigcup_{i=1}^{N/4n}A_i$, and edge set $\bigcup_{i=1}^{N/4n} E(A_i)$. Note that $H$ is $P_{n+1}$-free and therefore, a.a.s.~we have 
\[
\mathrm{ex}(G(N, p), P_{n+1})\geq e(H)\geq \frac{1}{10}\frac{\frac{1}{np}\log\left( \frac{N}{n} \right)}{\log\left(\frac{1}{np}\log \left(\frac{N}{n}\right)\right)}pn^2\cdot \frac{N}{4n}
=\frac{1}{40}\frac{\frac{1}{np}\log\left( \frac{N}{n}\right)}{\log\left(\frac{1}{np}\log\left( \frac{N}{n} \right)\right)}pnN.
\]
\qed

\subsection{Proof of Theorem~\ref{thm: largeN}}
\textit{Proof of Theorem~\ref{thm: largeN}~(i).} By the the Erd\H{o}s-Gallai Theorem (Theorem~\ref{EG}), it is sufficient to prove the lower bound. Let
\[
2\beta\log \beta=\min\left\{2\left(\frac{1}{p}\right)\log \left(\frac{1}{p}\right),\ \frac{4}{5np}\log N\right\}.
\]
Since $p\geq N^{-\frac{2}{5n}}$, we have $\beta=1/p$. If $p> 1/3$, then the proof simply follows from the proof of Theorem~\ref{thm: smallN}~(i).
Otherwise, we have $2\beta\log \beta\geq 6\log 3> 2\log(2e)$. Similarly as in the proof of Theorem~\ref{thm: smallN}~(ii), we can iteratively apply Lemma~\ref{lemma: denseset} $N/4n$ times with $\alpha=\frac{1}{2}$ and $r=\frac{3N}{4n}$, and a.a.s.~find a $P_{n+1}$-free subgraph $H$ of $G(N, p)$ with
\[
e(H)\geq \left(\frac{1 - \alpha}{2}\right)\beta pn^2\cdot \frac{N}{4n}=\frac{1}{16}nN.
\]
\qed
\newline
\newline
\textit{Proof of Theorem~\ref{thm: largeN}~(ii).} The proof of the upper bound is the same as in Theorem~\ref{thm: smallN}~(ii) and we skip here the full details. For the lower bound, we first assume that $p<N^{-1/5}$. Observe that 
\[
\frac{\omega}{\log\omega}pnN = \frac{\log N}{\log\left(\frac{1}{np}\log N\right)}N
\le \frac{\log N}{\log N^{1/5}} N =5N,
\]
where the inequality holds for $N\geq n e^{2n}$.
Therefore, by (\ref{trilower}), we trivially have
\[
\mathrm{ex}(G(N, p), P_{n+1})\geq N/15 \geq \frac{1}{75}\frac{\omega}{\log\omega}pnN.
\]

It remains to show the lower bound for $p\geq N^{-1/5}$. Let
\[
2\beta\log \beta=\min\left\{2\left(\frac{1}{p}\right)\log \left(\frac{1}{p}\right),\ \frac{4}{5np}\log N\right\}.
\]
Since $p\leq N^{-\frac{2}{5n}}$, we have $2\beta\log \beta=\frac{4}{5np}\log N$.
Since $N\geq ne^{2n}$, we  have
\[
\frac{1}{np}\log \left(\frac {3N}{8n}\right)\ge 2\beta\log \beta
\geq \frac{4}{5np}\log \left(ne^{2n}\right)
\geq \frac{8}{5p}
\geq \frac{8}{5} N^{\frac{2}{5n}}
\geq \frac{8e^{\frac{4}{5}}}{5}
>2\log(2e).
\]
Moreover, observe that for $\alpha=\frac{1}{2}$ and $p\geq N^{-1/5}$, we have $2\beta\log \beta\geq \frac{2}{\alpha np}\log\left(\frac{1}{\alpha np}\right)$.
Similarly as in the proof of Theorem~\ref{thm: smallN}~(ii), the proof is completed by iteratively applying Lemma~\ref{lemma: denseset} $N/4n$ times with $\alpha=\frac{1}{2}$ and $r=\frac{3N}{4n}$.
\qed

\section{Long paths and multicolor size-Ramsey number}
The size-Ramsey number $\hat{R}(F, r)$ of a graph $F$ is the smallest integer $m$ such that there exists a graph $G$ on $m$ edges with the property that any $r$-coloring of the edges of $G$ yields a monochromatic copy of $F$. The study of size-Ramsey number was initiated by Erd\H{o}s, Faudree, Rousseau and Schelp~\cite{erdHos1978size}. For paths, Beck~\cite{beck1983size}, resolving a \$100 question of Erd\H{o}s, proved that $\hat{R}(P_n, 2)< 900n$ for sufficiently large $n$. The strongest upper bound, $\hat{R}(P_n, 2)\leq 74n$, was given by Dudek and Pra{\l}at~\cite{dudek2017some}, and they also provide the lower bound, $\hat{R}(P_n, 2)\geq 5n/2 - O(1)$. Very recently, Bal and DeBiasio~\cite{bal2019new} further improved the lower bound to $(3.75 - o(1))n$.

For more colors, it was proved in~\cite{dudek2017some} that $\frac{(r+3)r}{4}n - O(r^2)\leq \hat{R}(P_n, r)\leq 33r4^rn$.  
Subsequently, Krivelevich~\cite{krivelevich2019long} (see also~\cite{krivelevich2019expanders}) showed that $\hat{R}(P_n, r)=O((\log r)r^2 n)$. An alternative proof of the above result was later given by Dudek and Pra{\l}at~\cite{DP2018}.
Both proofs indeed give a stronger \textit{density-type} result, which shows that any dense subset of a large enough structure contain the desired substructure. In particular, the proof in~\cite{DP2018} implies the following result.

\begin{thm}[\cite{DP2018}]\label{thm: density}
For $r\geq 2$ and $c\geq 7$, there exists a constant $\alpha=\alpha(c)$ such that the following statement holds a.a.s.~for $p\geq \alpha(\log r)/n$. Every subgraph $H$ of $G\in G(crn, p)$ with $e(H)\geq e(G)/r$ contains a $P_{n+1}$.
\end{thm}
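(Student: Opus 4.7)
The plan is to mirror the proof of Theorem~\ref{thm: smallN}~(i): assume for contradiction that $H \subseteq G = G(crn,p)$ is $P_{n+1}$-free with $e(H) \ge e(G)/r$, apply the DFS decomposition (Lemma~\ref{lemma: DFS}) to $H$, and derive a contradiction by comparing two a.a.s.~estimates. Set $N = crn$, so the DFS decomposition produces $N/n = cr$ pieces $F_i = E(S_i) \cup E(S_i, T_i)$ with $|S_i|=|T_i|=n$.

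First I would show the following sharpening of Lemma~\ref{claim: upper1}: for $\alpha = \alpha(c)$ sufficiently large and $p \ge \alpha(\log r)/n$, a.a.s.~every pair of disjoint $n$-sets $S, T \subseteq [N]$ satisfies
\[
e(S) + e(S,T) \le (3/2 + \epsilon)\, n^2 p,
\]
for any fixed small $\epsilon > 0$. The expectation of this quantity is $\binom{n}{2}p + n^2 p \sim (3/2) n^2 p$, so Chernoff's bound yields a failure probability at most $\exp(-\Omega(\epsilon^2 n^2 p))$. Since there are at most $\binom{N}{n}^2 \le (cre)^{2n}$ pairs, a union bound succeeds provided $\epsilon^2 np \gg \log(cre)$. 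Because $np \ge \alpha \log r$ and $r \ge 2$, this is guaranteed by taking $\alpha = \alpha(c,\epsilon)$ large enough depending on $c$ and $\epsilon$.

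Summing the DFS estimate over the $cr$ pieces then gives $e(H) \le (3/2 + \epsilon)\, crn^2 p$. On the other hand, a standard Chernoff bound applied to $e(G)$ itself shows a.a.s.~$e(G) \ge (1-\epsilon)\binom{N}{2}p = (1-\epsilon)(c^2 r^2 n^2 p)/2 \cdot (1 - o(1))$, so
\[
e(H) \ge e(G)/r \ge (1 - 2\epsilon)\, \frac{c^2 r n^2 p}{2}.
\]
Combining the two bounds forces
\[
(1 - 2\epsilon)\,\frac{c^2 r n^2 p}{2} \le (3/2 + \epsilon)\, crn^2 p,
\]
which rearranges to $c \le (3 + 2\epsilon)/(1 - 2\epsilon)$. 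Choosing $\epsilon$ small enough (independently of $c$), this contradicts $c \ge 7$, completing the proof.

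The main obstacle is the \emph{constant} in the DFS step: Lemma~\ref{claim: upper1} as stated gives $18n^2p$, which is too weak here, so one must rework the concentration argument to bring the constant arbitrarily close to the true expectation $3n^2p/2$. The price is that $\alpha$ must grow with $c$ (to absorb the $\log c$ term in the union bound), but since $\alpha$ is allowed to depend on $c$, this causes no trouble. Everything else—the DFS decomposition, Chernoff, and the pigeonhole inequality $e(H) \ge e(G)/r$—is routine.
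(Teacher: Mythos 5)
Your argument is correct and follows essentially the same route as the cited source~\cite{DP2018} (and as the paper's own proof of Theorem~\ref{thm: smallN}~(i)): a DFS decomposition via Lemma~\ref{lemma: DFS}, a uniform Chernoff--plus--union--bound estimate on $e(S)+e(S,T)$ over all pairs of disjoint $n$-sets, and a comparison with the a.a.s.~value of $e(G)/r$. The needed sharpening of Lemma~\ref{claim: upper1} to a $(3/2+\epsilon)n^2p$ bound is exactly where the constant $\alpha=\alpha(c)$ enters, and with any fixed $\epsilon<1/4$ the resulting inequality $c\le(3+2\epsilon)/(1-2\epsilon)$ contradicts $c\ge 7$, so the calculation closes.
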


Note that any improvement of the order of magnitude of $p$ in the above theorem would improve the upper bound for $\hat{R}(P_n, r)$. However, Theorem~\ref{thm: smallN}~(ii) implies that when $p\ll \left(\log cr\right)/(6n)$, i.e. $(\log cr)/np\gg 6$, a.a.s.~there exists a $P_{n+1}$-free subgraph of $G\in G(crn, p)$ which contains more than
\[
\frac{1}{40}\frac{(\log cr)/np}{\log\left((\log cr)/np \right)}pn\cdot crn\geq cpn\cdot crn> e(G)/r
\] 
edges. Therefore, $(\log r)/n$ is the threshold function for the density statement in Theorem~\ref{thm: density}. It would be interesting to know if $(\log r)/n$ is still the threshold function for the corresponding Ramsey-type statement.

\section{Concluding remarks}
Our investigation raises some open problems. The most interesting question is to investigate the corresponding Ramsey properties on random graphs. The Ramsey-type questions on sparse random graphs has been studied by several researchers, for example, see~\cite{bohman2011ramsey, spohel2010coloring}.
\begin{prob}
Determine the threshold function $p(n)$ for the following statement. For some constant $c$ and $r\geq 2$ ($c$ is independent of $r$), every $r$-coloring of $G(crn, p)$ contains a monochromatic $P_{n+1}$.
\end{prob}
Theorem~\ref{thm: density} implies that $p(n)=O((\log r)/n)$, while the lower bound of $\hat{R}(P_n, r)$ shows that $p(n)=\Omega(1/n)$, where $n$ goes to infinity. The exact behavior of $p(n)$ remains open and its determination would be very useful for studying the size-Ramsey number of paths. 

Another direction is to consider the following graph parameter. Denote by $c(G, F)$ the minimum number of colors $k$ such that there exists a $k$-coloring of $G$ without monochromatic $F$. 
Clearly, we have 
\begin{equation}\label{eq: ramsey}
c(G(N, p), P_{n+1})
\geq \frac{\binom{N}{2}p}{\mathrm{ex}(G(N, p), P_{n+1})}
\geq \frac{pN^2}{3\mathrm{ex}(G(N, p), P_{n+1})}.
\end{equation}
Let $r=N/n$. We first present two general upper bounds on $c(G(N, p), P_{n+1})$.
\begin{thm}\label{thm: cupper}
Suppose $r$ is a prime power, then $c(G(N, p), P_{n+1})\leq r+1$.
\end{thm}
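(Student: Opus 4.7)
The plan is to produce an explicit $(r+1)$-edge-coloring of $K_N$ in which no color class contains a $P_{n+1}$; since $G(N,p)$ is (isomorphic to) a subgraph of $K_N$, this coloring restricts to an $(r+1)$-coloring of $G(N,p)$ with the desired property. The construction exploits the affine plane $\mathrm{AG}(2,r)$, which exists because $r$ is a prime power. Recall that $\mathrm{AG}(2,r)$ has $r^2$ points organized into $r+1$ parallel classes, each consisting of $r$ pairwise-disjoint lines of $r$ points, and any two distinct points lie on exactly one line (and hence in exactly one parallel class).

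Assume first the divisibility condition $r\mid n$, set $m=n/r$, and partition $V=[N]$ into $r^2$ \textit{blobs} of size $m$, indexed by the points of $\mathrm{AG}(2,r)$. Fix an enumeration $0,1,\dots,r$ of the parallel classes. Color~$0$ is declared to consist of all edges inside a single blob together with all edges between two distinct blobs whose points are collinear in class~$0$. For each $\alpha\ge 1$, color~$\alpha$ consists of all edges joining two distinct blobs whose points are collinear in parallel class~$\alpha$. Because any pair of distinct blobs lies on exactly one line of exactly one parallel class, every edge of $K_N$ receives exactly one color, and this is a legitimate partition of $E(K_N)$ into $r+1$ classes.

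Next I check that each color class is $P_{n+1}$-free. Color~$0$ decomposes into $r$ vertex-disjoint copies of $K_n$: the $r$ blobs along a single line of class~$0$ together carry $rm=n$ vertices and, by construction, all $\binom{n}{2}$ edges among them; a $K_n$ contains no $P_{n+1}$ simply for lack of vertices. For $\alpha\ge 1$, color~$\alpha$ decomposes into $r$ vertex-disjoint copies of the balanced complete $r$-partite graph $K_{m,\dots,m}$ (one per line of class~$\alpha$); each such component has exactly $n$ vertices and so a fortiori contains no $P_{n+1}$. Thus all $r+1$ color classes are $P_{n+1}$-free, completing the argument when $r\mid n$.

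The principal obstacle is the case $r\nmid n$, where the $r^2$ blobs cannot all have the same size. My plan to handle it is to use blob sizes in $\{\lceil n/r\rceil,\lfloor n/r\rfloor\}$, assigned to the points of $\mathrm{AG}(2,r)$ so that every line totals exactly $n$ vertices; this reduces to finding a subset $S$ of $\mathrm{AG}(2,r)$ meeting every line in a prescribed constant number of points. When such a uniform $S$ exists (for instance an affine subplane of appropriate order), the argument above goes through verbatim, since each color class remains a disjoint union of graphs on at most $n$ vertices. In the residual cases, one must fall back on a more ad hoc step---for instance, absorbing the ``defect'' into color~$0$ by slightly enlarging the $K_n$ components into graphs on up to $n$ vertices that still avoid $P_{n+1}$, or replacing the affine plane by a transversal design with matching parameters---which I expect to be the most technically delicate part of the proof.
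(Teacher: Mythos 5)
Your construction is essentially the same as the paper's: color the edges of $K_N$ (and hence of $G(N,p)\subseteq K_N$) by partitioning the vertices into $r^2$ blobs of size $n/r$ indexed by the points of the affine plane of order $r$, assigning to a cross-edge the index of the parallel class containing the line through its two blob-points, so that each color class splits into connected pieces on at most $n$ vertices. Your more explicit treatment of the within-blob edges (placing them in color~$0$, yielding disjoint $K_n$'s) is a harmless refinement of the paper's statement, which only colors crossing edges but whose argument works for any fixed assignment of the internal edges. Regarding the case $r\nmid n$ that you flag: the paper silently assumes $n/r$ is an integer (consistent with its blanket convention of ignoring floors and ceilings), so you are not missing anything present in the paper's proof; but be aware that your sketched fix is genuinely delicate -- a size-$rt$ set of ``large'' blobs meeting every line in exactly $t$ points need not exist (e.g.\ $r=2$, $t=1$), so a complete treatment of the non-divisible case would require a different idea than the one you outline.
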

\begin{proof}
We use a construction from~\cite{gyarfas1977partition} (also appeared in~\cite{krivelevich2019long}). Let $A_r$ be an affine plane of order $r$, i.e. $r^2$ points with $r^2+r$ lines, where every pair of points is contained in a unique line, and the lines can be split into $r+1$ disjoint families $F_1, \ldots, F_{r+1}$ so that the lines inside the families are parallel.

We  arbitrarily partition $[N]$ into $r^2$ parts $V_1, V_2, \ldots, V_{r^2}$, where each part has size $N/r^2=n/r$. We define an $r+1$-coloring as follows. If $e$ is an edge crossing between $V_x$ and $V_y$, where the unique line containing $xy$ is in the family $F_i$, then we color $e$ by $i$. Observe that every connected subgraph in color $i$ has its vertex set $V$ inside $\cup_{x\in L}V_x$ for some line $L\in A_r$. Therefore, we have $|V|\leq r\cdot n/r=n$, and there is no monochromatic $P_{n+1}$.
\end{proof}

\begin{thm}
A.a.s.~$c(G(N, p), P_{n+1})\leq 2pN$.
\end{thm}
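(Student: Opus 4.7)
The plan is to exhibit an edge coloring of $G(N,p)$ using at most $2pN$ colors, each color class being a matching; since a matching contains no copy of $P_3$ (and hence no $P_{n+1}$ for $n \geq 2$), such a coloring witnesses $c(G(N,p), P_{n+1}) \leq 2pN$. The existence of such a coloring reduces, via Vizing's theorem $\chi'(G) \leq \Delta(G)+1$, to bounding the maximum degree of $G(N,p)$.

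First, I would establish that a.a.s.\ $\Delta(G(N,p)) \leq 2pN - 1$. Each vertex degree is distributed as $\mathrm{Bin}(N-1,p)$ with mean $(N-1)p < pN$, so an upper-tail companion of Lemma~\ref{chernoff} gives
\[
\mathbb{P}(\deg(v) \geq 2pN) \leq \exp(-\Omega(pN)).
\]
A union bound over the $N$ vertices shows $\Delta(G(N,p)) \leq 2pN - 1$ a.a.s., provided $pN$ is sufficiently large (say $pN \geq C \log N$). Vizing's theorem then produces the required partition of $E(G(N,p))$ into at most $2pN$ matchings, completing the proof in this regime.

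The main obstacle is the sparse regime, where $pN$ is close to $1$: here the maximum degree of $G(N,p)$ is typically of order $\log N / \log\log N$ and thus far exceeds $2pN$, so the Vizing argument fails. To handle this regime I would replace the proper edge coloring by a random one built from a fixed combinatorial skeleton: take an arbitrary $1$-factorization $M_1, \ldots, M_{N-1}$ of $K_N$ (adding a dummy vertex if $N$ is odd), partition the matchings into $2pN$ groups of size $\lceil (N-1)/(2pN) \rceil$, and color each edge of $G(N,p)$ by the group containing the unique matching to which it belongs. Each color class is then a $p$-random subgraph of a regular graph of degree at most $\lceil 1/(2p) \rceil$, so the expected degree in a color class is at most $1/2$; a standard subcritical branching-process / exploration argument then yields, a.a.s.\ and simultaneously over all $2pN$ color classes, maximum component size $O(\log N)$.

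The technically delicate step is the last one: one must verify that $O(\log N)$ really is at most $n$ throughout the parameter range of Theorems~\ref{thm: smallN} and~\ref{thm: largeN}, and combine it smoothly with the Vizing argument at the boundary $pN \asymp \log N$. I expect this matching of the two regimes, together with quantifying the branching process bound tightly enough that the union bound over $2pN \leq 2N$ color classes still succeeds, to be the most delicate part of the argument.
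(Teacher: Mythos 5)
Your proposal is correct in outline, but it takes a noticeably more elaborate route than the paper. The paper's proof is a one-step argument with no case split: pick a uniformly random $k$-coloring of the edges of $G(N,p)$ with $k = 2pN$. Since every edge of $K_N$ then lands in color class $i$ with probability $p/k = 1/(2N)$ independently, each color class $G_i$ is distributed as $G(N,1/(2N))$, which is subcritical; a.a.s.\ every component of every $G_i$ has $O(\log N)$ vertices, so no $G_i$ contains a $P_{n+1}$. This one argument covers both of your regimes. Your Vizing step is thus unnecessary (though valid where $pN \gtrsim \log N$), and in the sparse regime your $1$-factorization skeleton plays exactly the role that the random coloring plays in the paper: it makes each color class a subcritical random graph with expected degree $\approx 1/2$. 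The only genuine difference in content is that your skeleton is deterministic, whereas the paper's coloring is random; both reduce to the same branching-process bound. One point in your favor: you explicitly flag the step $O(\log N) \le n$ as delicate, and this caution is warranted --- the paper asserts $O(\log N) \le n$ without quantifying the constant, and this inequality is not obviously valid throughout the parameter range in which the bound $2pN \le r+1$ is nontrivial (e.g.\ when $N \ge n e^{2n}$ so that $\log N \ge 2n$). This is an issue inherited from the paper rather than a flaw specific to your approach, but if you want a complete argument you would need to make the constant in the subcritical component bound explicit and restrict the parameter range accordingly, or else fall back to Theorem~\ref{thm: cupper} in the offending range.
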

\begin{proof}
Let $k=2pN$, and we can assume $k\leq r+1$.
Consider a random $k$-coloring of $G(N, p)$. Then the subgraph $G_i$, whose edges are all edges in color $i$, is in $G(N, p')$, where $p'=p/k=1/2N$. A fundamental result of Erd\H{o}s and R\'{e}nyi shows that a.a.s the largest component of $G_i$ has size $O(\log N)\leq n$. Therefore, a.a.s.~there is no monochromatic $P_{n+1}$.
\end{proof}
\begin{cor}
If $p=\frac{1}{\omega \cdot n}$, where $\omega=\omega(r)\geq 2$, then a.a.s.~$c(G(N, p), P_{n+1})\leq 2r/\omega$.
\end{cor}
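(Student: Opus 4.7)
The plan is to observe that this corollary is essentially a direct substitution into the preceding theorem, which already gives the a.a.s.\ bound $c(G(N,p),P_{n+1}) \leq 2pN$ provided the number of colors $k=2pN$ satisfies $k \leq r+1$ (this latter condition is needed so that the random $k$-coloring construction makes sense and so the union bound over colors in the preceding proof is valid).

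With $p=1/(\omega n)$ in hand, I would first compute
\[
2pN \;=\; \frac{2N}{\omega n} \;=\; \frac{2r}{\omega},
\]
which is exactly the bound claimed in the corollary. Next I would verify the hypothesis $2pN \leq r+1$ of the preceding theorem: since $\omega \geq 2$, we have $2r/\omega \leq r \leq r+1$, so the hypothesis is satisfied. Applying the preceding theorem therefore yields a.a.s.
\[
c(G(N,p),P_{n+1}) \;\leq\; 2pN \;=\; \frac{2r}{\omega},
\]
which is the desired conclusion.

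There is really no obstacle here; the entire content is already carried by the preceding theorem, and the corollary is a one-line arithmetic rewriting under the parametrization $p=1/(\omega n)$. The only thing worth flagging explicitly is the check that $\omega \geq 2$ is used precisely to guarantee $k \leq r+1$, so that the hypothesis of the preceding theorem is not violated.
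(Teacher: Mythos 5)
Your proof is correct and is exactly the intended argument: the corollary follows by substituting $p = 1/(\omega n)$ into the bound $c(G(N,p),P_{n+1}) \leq 2pN$ from the preceding theorem, giving $2pN = 2r/\omega$, with the hypothesis $\omega \geq 2$ ensuring $2pN \leq r \leq r+1$. The paper states the corollary without a separate proof precisely because it is this one-line substitution.
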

For the lower bound, the proof of Theorem 1.2. in~\cite{DP2018} implies the following.
\begin{thm}
For $p\geq 22(\log (r/7))/n$, a.a.s.~$c(G(N, p), P_{n+1})> r/7$.
\end{thm}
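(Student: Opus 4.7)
The plan is to argue by contradiction, combining a pigeonhole step with the density-type Theorem~\ref{thm: density} that is quoted immediately above the statement. Write $G = G(N,p)$ and $r = N/n$, and set $k = \lfloor r/7 \rfloor$. If $c(G, P_{n+1}) \le r/7$, then $G$ admits an edge coloring with at most $k$ colors containing no monochromatic $P_{n+1}$; our goal is to show a.a.s.~this is impossible. Since any coloring with fewer colors can be viewed as a $k$-coloring, it suffices to rule out $k$-colorings.

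Concretely, I would fix in advance any subset $V' \subseteq [N]$ with $|V'| = 7kn$, which exists because $7kn \le 7 \cdot (r/7) \cdot n = N$. The induced random graph $G[V']$ is distributed exactly as $G(7kn, p)$, and any would-be bad coloring of $G$ restricts to a $k$-coloring of $G[V']$ that also avoids monochromatic $P_{n+1}$. By pigeonhole, the heaviest color class $H \subseteq G[V']$ satisfies $e(H) \ge e(G[V'])/k$. Now apply Theorem~\ref{thm: density} to $G[V']$ with the role of ``$r$'' there played by $k$ and with $c = 7$. Its hypothesis reads $p \ge \alpha(7)(\log k)/n$; because $k \le r/7$ implies $\log k \le \log(r/7)$, the stated bound $p \ge 22(\log(r/7))/n$ covers this hypothesis (provided the absolute constant $\alpha(7)$ appearing in Theorem~\ref{thm: density} is at most $22$, which is precisely what makes the numerical constant $22$ appear here). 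Theorem~\ref{thm: density} then asserts that a.a.s.\ every subgraph of $G[V']$ with at least $e(G[V'])/k$ edges contains a $P_{n+1}$, so $H$ contains a $P_{n+1}$, contradicting the assumption on the coloring.

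The main potential obstacle is the dependence of $k$ on $N$ and $n$: Theorem~\ref{thm: density} is stated for fixed parameters, so one has to be a little careful that the a.a.s.~conclusion applies at the value of $k$ dictated by the pair $(N, n)$. This is handled by noting that the density property in Theorem~\ref{thm: density} is a property of the random graph $G(7kn, p)$ itself (quantified over \emph{all} subgraphs, not a particular one chosen by the adversary), so it suffices that the property hold a.a.s.\ for the single value $k = \lfloor r/7 \rfloor$ in play. A minor boundary issue is the case when $r < 14$, for which $k \le 1$ and the claim $c(G, P_{n+1}) > r/7$ is automatic as soon as $G$ has any edge; this case can be dispatched in one line before setting up the main argument.
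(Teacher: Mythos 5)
Your argument is correct and is essentially the same as the paper's intended proof: the paper's own ``proof'' is a one-line citation to the proof of Theorem~1.2 in~\cite{DP2018}, and what that citation unpacks to is precisely the density statement of Theorem~\ref{thm: density} combined with the pigeonhole step you describe. Restricting to a fixed set $V'$ of exactly $7kn$ vertices (with $k=\lfloor r/7\rfloor$) to handle the rounding, noting that $G[V']$ is distributed as $G(7kn,p)$, and applying the density theorem with ``$r$''$\,=k$ and ``$c$''$\,=7$ is the right move, and your observation that the density property is a deterministic property of $G[V']$ (so no union bound over colorings is needed) is exactly the point.

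Two small remarks. First, you correctly flag that the value $22$ cannot be extracted from Theorem~\ref{thm: density} as stated (which only asserts the existence of some $\alpha(c)$); that is exactly why the paper cites ``the proof of Theorem~1.2 in~\cite{DP2018}'' rather than the theorem statement — one must open that proof and check that $\alpha(7)\le 22$, which you leave as an assumption rather than verify. Second, your boundary case is not quite right as phrased: for $7\le r<14$ one has $k=1$ and the claim $c(G,P_{n+1})>r/7$ forces $c\ge 2$, i.e.\ it requires $G$ to contain a copy of $P_{n+1}$, not merely an edge. This is a harder statement than ``$G$ has any edge,'' though it still follows from the same DFS/density machinery (or is vacuous when $r\le 7$, since then $\log(r/7)\le 0$); it deserves a slightly more careful sentence than the one you gave.
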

This together with Theorem~\ref{thm: cupper} shows that a.a.s.~$c(G(N, p), P_{n+1})=\Theta(r)$ for $p=\Omega((\log r)/n)$.
On the other hand, Theorem~\ref{thm: smallN} and~(\ref{eq: ramsey}) give a lower bound for small $p$.
\begin{thm}
For $p\leq (\log r)/34n$, a.a.s.
$c(G(N, p), P_{n+1})\geq\frac{\log \omega}{24\omega}r,$ where $\omega=(\log r)/np$. 
\end{thm}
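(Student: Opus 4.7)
The approach is a one-step consequence of what has been assembled so far: I would combine the deterministic pigeonhole-type bound in~\eqref{eq: ramsey} with the sparse upper bound on $\mathrm{ex}(G(N,p), P_{n+1})$ furnished by Theorem~\ref{thm: smallN}~(ii). The hypothesis $p \le (\log r)/(34n)$ lies well inside the regime $p \le (\log r)/(6n)$ where that upper bound is valid, and it also guarantees $\omega = (\log r)/(np) \ge 34$, so in particular $\log\omega > 0$ and the bound we are shooting for has the right sign.

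The calculation is then essentially immediate. Theorem~\ref{thm: smallN}~(ii) gives a.a.s.
\[
\mathrm{ex}(G(N, p), P_{n+1}) \le 8\,\frac{\omega}{\log\omega}\,pnN,
\]
and plugging this into the second inequality of~\eqref{eq: ramsey} yields
\[
c(G(N,p), P_{n+1}) \ge \frac{pN^2}{3 \cdot 8(\omega/\log\omega)\,pnN} \;=\; \frac{N}{n}\cdot\frac{\log\omega}{24\,\omega} \;=\; \frac{r\,\log\omega}{24\,\omega},
\]
which is the claimed bound. I would write this chain of inequalities out as a single display for clarity.

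The one point that deserves a brief check is the range of $N$: Theorem~\ref{thm: smallN} is stated under $3n \le N \le n e^{2n}$, while the statement above imposes no such cap. For $N > n e^{2n}$ one might instinctively appeal to Theorem~\ref{thm: largeN}~(ii), whose $\omega$ is defined with $\log N$ rather than $\log r$, producing a mild mismatch. To sidestep this, I would note that the upper-bound half of Theorem~\ref{thm: smallN}~(ii) rests only on Lemma~\ref{lemma: DFS} and Lemma~\ref{claim: upper2}, and that both of these hold a.a.s.\ for arbitrary $N \ge 3n$ with $\omega=(\log r)/(np)$, independent of the ceiling $N \le n e^{2n}$. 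Hence the inequality $\mathrm{ex}(G(N,p), P_{n+1}) \le 8(\omega/\log\omega)\,pnN$ is available throughout the relevant range of $(N,n,p)$, and the substitution above goes through uniformly. Beyond this essentially cosmetic bookkeeping, there is no genuine obstacle; the theorem is really a direct corollary of Theorem~\ref{thm: smallN}~(ii) packaged through~\eqref{eq: ramsey}.
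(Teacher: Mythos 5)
Your proof is correct and matches the paper's intended argument: the paper simply remarks that this theorem follows from Theorem~\ref{thm: smallN} together with~\eqref{eq: ramsey}, and your display spells out exactly that substitution, giving $c(G(N,p),P_{n+1}) \ge pN^2\big/\big(3\cdot 8(\omega/\log\omega)pnN\big) = (\log\omega/24\omega)\,r$. Your remark that the upper-bound half of Theorem~\ref{thm: smallN}~(ii) rests only on Lemma~\ref{lemma: DFS} and Lemma~\ref{claim: upper2} (neither of which uses the cap $N \le n e^{2n}$) and hence is available for all $N \ge 3n$ with $\omega = (\log r)/(np)$ correctly resolves a detail the paper leaves implicit.
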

This naturally raises the following question.
\begin{prob}
What is the exact behavior of $c(G(N, p), P_{n+1})$ for $p=o((\log r)/n)$, where $n$ goes to infinity?
\end{prob}

\textbf{Acknowledgment:}
We would like to thank to the referees for their valuable comments and suggestions.

\end{document}